\newtheorem{thm}{Theorem}
\newtheorem{lem}{Lemma}
\newtheorem{cor}{Corollary}
\newtheorem{rmk}{Remark}
\newcommand{\eigenmin}{x_{\rm{min}}}
\newcommand{\eigenmax}{x_{\rm{max}}}
\journal{Linear Algebra and its Applications}
\newcommand{\Si}{\mathcal{S}}
\newcommand{\Le}{\mathcal{L}}
\begin{document}

\begin{frontmatter}

\title{On the spectral radius of graphs: nonregular distance-hereditary graphs with given edge-connectivity, graphs with tree-width $k$ and block graphs with prescribed independence number $\alpha$\tnoteref{mytitlenote}}
\tnotetext[mytitlenote]{Cristian M. Conde acknowledges partial support from ANPCyT PICT 2017-2522. Ezequiel Dratman and Luciano N. Grippo acknowledge partial support from ANPCyT PICT 2017-1315.}

\author[conicet,iam,ici]{Cristian M. Conde}
\ead{cconde@campus.ungs.edu.ar}

\author[conicet,ici]{Ezequiel Dratman}
\ead{edratman@campus.ungs.edu.ar}

\author[conicet,ici]{Luciano N. Grippo}
\ead{lgrippo@campus.ungs.edu.ar}

\address[conicet]{Consejo Nacional de Investigaciones
Cient{\' i}ficas y T{\' e}cnicas, Argentina}
\address[iam]{Instituto Argentino de Matem{\' a}tica Alberto Calder{\' o}n}
\address[ici]{Instituto de Ciencias, Universidad Nacional de General Sarmiento, Argentina}





\begin{abstract}
The edge-connectivity of a graph is the minimum number of edges whose deletion disconnects the graph. Let $\Delta(G)$ the maximum degree of a graph $G$ and let $\rho(G)$ be the spectral radius of $G$. In this article we present a lower bound for $\Delta(G)-\rho(G)$ in terms of the edge connectivity of $G$, where $G$ is a nonregular distance-hereditary graph. We also prove that $\rho(G)$ reaches the maximum at a unique graph in $\mathcal G$, when  $\vert V(G)\vert = n$, and $\mathcal G$ either is in the class of graphs with bounded tree-width or is in the class of block graphs with prescribed independence number. 
\end{abstract}

\begin{keyword}
block graphs, distance-hereditary graphs, $k$-trees, nonregular graphs, spectral radius of a graph, tree-width. 
\MSC[2010] 05 C50, 15 A18.
\end{keyword}

\end{frontmatter}


\section{Introduction}

To find lower and upper bounds for the spectral radius of a graph is a problem that have attracted the attention of many researchers. Probably, one of the most important motivations for studying this topic is due to a problem posted by Brualdi and Solheid in~\cite{BS-1986}. They proposed, in that article, to characterize the graphs having the maximum spectral radius among graphs on $n$ vertices and in a determined class of graphs. Since then, a wide variety of results on this topic have been published. In addition, finding bounds for the spectral radius of any graph in terms of nonspectral parameters is interesting enough. Many works can be found in the specialized literature. A recently published book summarizes most of the results related to this topic~\cite{Dragan2015}.

It is well known that the spectral radius of a graph $G$ is at most $\Delta(G)$ where $\Delta(G)$ stands for the maximum degree of $G$. In addition if $G$ is connected, the equality holds if and only if $G$ is a regular graph~\cite{Bapat}, meaning all of its vertices have the same degree. So, it is interesting to compare how far is the spectral radius $\rho(G)$ of a nonregular graph from $\Delta(G)$. In this direction, we can find in the literature the following two results among others. 

\begin{thm}~\cite{Cioba2007}
\label{thm: cioba}
If $G$ is a nonregular graph, then 

\[\Delta(G)-\rho(G)\ge \frac{(\vert V(G)\vert\Delta(G)-2\vert E(G)\vert)}{\vert V(G)\vert (D(\vert V(G)\vert\Delta(G)-2\vert E(G)\vert)+1)}\]
\end{thm}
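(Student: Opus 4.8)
The plan is to work with the Perron--Frobenius eigenvector of the adjacency matrix $A$ and to exploit the classical quadratic identity that relates $\Delta(G)-\rho(G)$ to the degree deficiencies and to the eigenvector differences across edges. Throughout I assume $G$ is connected (so that $D$, the diameter, is finite and the Perron eigenvector is positive); I write $n=\vert V(G)\vert$, $m=\vert E(G)\vert$, $\Delta=\Delta(G)$, $\rho=\rho(G)$, and set $s=n\Delta-2m=\sum_{v}(\Delta-d(v))$, which is a positive integer precisely because $G$ is nonregular.

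First I would fix a positive eigenvector $x$ for $\rho$, normalized so that $\eigenmax=\max_v x_v=1$, and let $\eigenmin=\min_v x_v\in(0,1]$. Starting from $Ax=\rho x$ and expanding $\Delta\sum_v x_v^2-\rho\sum_v x_v^2$, I would establish the identity
\[
(\Delta-\rho)\sum_{v} x_v^2=\sum_{v}(\Delta-d(v))\,x_v^2+\sum_{uv\in E(G)}(x_u-x_v)^2,
\]
which follows by writing $\Delta=d(v)+(\Delta-d(v))$ and regrouping $\sum_v d(v)x_v^2=\sum_{uv\in E(G)}(x_u^2+x_v^2)$ against $\rho\sum_v x_v^2=2\sum_{uv\in E(G)}x_ux_v$. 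Both sums on the right are nonnegative, and since $x_v\le\eigenmax=1$ for every $v$ we have $\sum_v x_v^2\le n$; hence $\Delta-\rho$ is at least $1/n$ times the right-hand side.

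Next I would bound the two sums from below separately. For the first, $\sum_v(\Delta-d(v))x_v^2\ge \eigenmin^2\sum_v(\Delta-d(v))=\eigenmin^2 s$. For the second, I would pick a shortest path $v_0,v_1,\dots,v_\ell$ (with $\ell\le D$) joining a vertex attaining $\eigenmin$ to one attaining $\eigenmax=1$; telescoping together with Cauchy--Schwarz gives $(1-\eigenmin)^2=\big(\sum_i (x_{v_{i+1}}-x_{v_i})\big)^2\le \ell\sum_i (x_{v_{i+1}}-x_{v_i})^2\le D\sum_{uv\in E(G)}(x_u-x_v)^2$, so the edge sum is at least $(1-\eigenmin)^2/D$. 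Combining these yields
\[
\Delta-\rho\ge \frac1n\left(\eigenmin^2 s+\frac{(1-\eigenmin)^2}{D}\right).
\]

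Finally I would treat the right-hand side as a function of $t=\eigenmin\in[0,1]$, minimizing $g(t)=st^2+(1-t)^2/D$ by calculus: the minimizer is $t=1/(Ds+1)$ with minimum value $s/(Ds+1)$, which gives exactly $\Delta-\rho\ge s/\big(n(Ds+1)\big)$, i.e.\ the claimed bound. The step I expect to be most delicate is the edge-sum estimate: one must ensure the telescoping/Cauchy--Schwarz argument runs along a genuine shortest path (so that $\ell\le D$) and that each squared difference along it is counted at most once inside $\sum_{uv\in E(G)}(x_u-x_v)^2$. The identity and the one-variable optimization are then routine once the normalization $\eigenmax=1$ is fixed.
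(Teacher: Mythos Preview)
Your argument is correct and is precisely the standard proof of this result (originally due to Cioab\u{a}). Note, however, that the paper does \emph{not} prove Theorem~\ref{thm: cioba} at all: it is quoted from~\cite{Cioba2007} as background in the introduction, with no proof given. There is thus no ``paper's own proof'' of this particular statement to compare against.

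That said, the machinery you use---the identity
\[
(\Delta-\rho)\|x\|^2=\sum_{v}(\Delta-d(v))x_v^2+\sum_{uv\in E}(x_u-x_v)^2,
\]
the lower bound $\sum_v(\Delta-d(v))x_v^2\ge s\,\eigenmin^2$, the Cauchy--Schwarz/telescoping estimate along a shortest path to control $\sum_{uv\in E}(x_u-x_v)^2$, and the final one-variable optimization in $\eigenmin$---is exactly the template the paper itself employs in its proof of Theorem~\ref{thm: lower bound DH}. The only twist there is that, for distance-hereditary graphs, one can run the path argument along $\kappa'(G)$ edge-disjoint $a,b$-paths (each of length at most $D$), which inserts the extra factor $\kappa'(G)$ in the edge-sum bound. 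Your normalization $\eigenmax=1$ (so $\sum_v x_v^2\le n$) is equivalent to the paper's normalization $\|x\|=1$ together with $\eigenmax^2\ge 1/n$; either route gives the factor $1/n$ in the final bound.
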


\begin{thm}~\cite{Cioba2007bis}
\label{thm: cioba mejorado}
If $G$ is a nonregular graph, then 

\[\Delta(G)-\rho(G)\ge\frac{1}{D(G)|V(G)|}\]

\end{thm}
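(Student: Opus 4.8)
The plan is to work directly with the Perron eigenvector $x$ associated with $\rho=\rho(G)$, which is strictly positive because $G$ is connected, and to read off the deviation $\Delta-\rho$ from how far $x$ is from being constant. First I would normalize $x$ so that $\max_i x_i=1$, say $x_u=1$, and record the two elementary identities obtained by pairing the eigenvalue equation $\rho x_i=\sum_{j\sim i}x_j$ against $x$ and against the all-ones vector. Writing $d_i$ for the degree of vertex $i$, these read
\[(\Delta-\rho)\sum_i x_i^2=\sum_i(\Delta-d_i)x_i^2+\sum_{\{i,j\}\in E(G)}(x_i-x_j)^2\]
and
\[(\Delta-\rho)\sum_i x_i=\sum_i(\Delta-d_i)x_i.\]
Both right-hand sides are nonnegative and vanish exactly when $G$ is regular, so they quantify the irregularity of $G$; the first is the one I would push hardest, while the second gives a particularly clean reduction.

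Next I would inject the diameter. Since $\sum_i x_i^2\le n$ with $n=|V(G)|$, the first identity gives $(\Delta-\rho)n\ge\sum_i(\Delta-d_i)x_i^2+\sum_{\{i,j\}\in E(G)}(x_i-x_j)^2$. Because $G$ is nonregular there is a vertex $v$ with $d_v<\Delta$, whence $\sum_i(\Delta-d_i)x_i^2\ge x_v^2$. Taking a shortest path $u=w_0,w_1,\dots,w_\ell=v$ with $\ell\le D=D(G)$ and applying Cauchy--Schwarz to its consecutive differences yields
\[\sum_{\{i,j\}\in E(G)}(x_i-x_j)^2\ge\sum_{t=0}^{\ell-1}(x_{w_t}-x_{w_{t+1}})^2\ge\frac{(1-x_v)^2}{\ell}\ge\frac{(1-x_v)^2}{D}.\]
Minimizing $t^2+(1-t)^2/D$ over $t\in[0,1]$ gives the value $1/(D+1)$, and hence $\Delta-\rho\ge\frac{1}{(D+1)\,n}$. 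This already proves the inequality up to replacing $D$ by $D+1$, and the whole argument is robust and short.

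The hard part will be squeezing out the last factor, i.e.\ improving $D+1$ to $D$ to reach the stated $\frac{1}{D\,n}$. The Cauchy--Schwarz step is tight only when $x$ falls \emph{linearly} from $1$ at $u$ down to $x_v=1/(D+1)$ along the path, and this extremal profile is incompatible with the eigenvalue equation at the intermediate full-degree vertices, which forces a slower, essentially quadratic, decay of $x$ away from its maximum. I would exploit this through the second identity: since $\sum_i x_i\le n$, it suffices to prove that $\sum_i(\Delta-d_i)x_i\ge\frac1D$, and as the worst case is that of a single deficient vertex $v$ (where $\sum_i(\Delta-d_i)x_i=x_v$ and $v$ is forced to be the global minimum of $x$), the crux reduces to the Perron-weight bound $x_v\ge\frac1D$. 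Establishing this decay estimate, by iterating $\rho x_{w_t}=\sum_{j\sim w_t}x_j$ along the shortest path so as to control how fast $x$ can drop before meeting a vertex of deficient degree, is where the real work lies; once it is in place the theorem follows at once from the $\ell^1$ identity.
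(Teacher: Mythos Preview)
The paper does not prove this theorem at all: Theorem~\ref{thm: cioba mejorado} is merely quoted from the literature (reference~\cite{Cioba2007bis}) as background, so there is no ``paper's own proof'' to compare your attempt against.

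That said, your write-up is not a complete proof of the stated bound. Your first pass, via the quadratic identity and Cauchy--Schwarz along a shortest path, cleanly yields $\Delta-\rho\ge 1/((D+1)n)$, but as you yourself acknowledge, this is strictly weaker than the claimed $1/(Dn)$. For the sharp constant you switch to the $\ell^1$ identity $(\Delta-\rho)\sum_i x_i=\sum_i(\Delta-d_i)x_i$ and reduce everything to the Perron-weight estimate $x_v\ge 1/D$ for a deficient vertex $v$, but you then explicitly say that establishing this decay estimate ``is where the real work lies'' and stop. So the proposal is a sketch of a strategy with the decisive step missing, not a proof. Two further issues with the proposed endgame: first, it is not automatic that the vertex of minimum degree coincides with the vertex where $x$ is minimal, so the ``worst case is a single deficient vertex with $x_v=\min_i x_i$'' reduction needs justification; second, the inequality $x_{\min}\ge 1/D$ under the normalization $x_{\max}=1$ is false in general (already for a path), so any argument along these lines must simultaneously exploit the degree term and cannot rely on a pure pointwise lower bound for $x_v$.
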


Notice that the lower bound obtained in Theorem~\ref{thm: cioba mejorado} improves that given in Theorem~\ref{thm: cioba}. In this paper we present a lower bound for those nonregular graphs within the class of distance-hereditary graphs that improves in some cases that of Theorem~\ref{thm: cioba mejorado} (see Theorem~\ref{thm: lower bound DH}).

In 2004, Hong published a result presenting an upper bound for the spectral radius of graph having tree-width $k$~\cite{Hong2004}, whose proof relies on an upper bound of Hong, Shu and Fang for the spectral radius of a graph on $n$ vertices in terms of the number of edges and the minimum degree~\cite{HSF2001}, where those graph satisfying the equality were also characterized. Since the proof of that result is involving, we decided to present a simpler one as an application of Lemma~\ref{lem: k-trees} whose demonstration only use rudiments of linear algebra.

Lu and Lin find the only graph which maximizes the spectral radius among trees with prescribed independence number~\cite{JiLu2016}. In~\cite{LuLin2015}, the authors find the unique connected graph on $n$ vertices, with given connectivity  and prescribed independence number having maximal spectral radius. Our contribution, in that line of work, is to find the unique block graph on $n$ vertices and given independence number with maximal spectral radius. Indeed, we have been able to prove that the pineapple on $n$ vertices having maximum independent set $\alpha$ is that unique graph. Notice also that block graphs is a superclass of trees. 

This article is organized as follows. In Section~\ref{sec: preliminaries}, we introduce some preliminary results and definitions. In Section~\ref{sec: DH graphs} we present a lower bound for $\Delta(G)-\rho(G)$, when $G$ is a distance-hereditary graph. Section~\ref{sec: k-trees} is devoted to present a simpler proof of Theorem 2.1, which appears in~\cite{Hong2004}, related to the maximum spectral radius among all $k$-trees. Finally, in Section~\ref{sec: block graphs} we find the unique block graph on $n$ vertices and given independence number with maximum spectral radius.

\section{Preliminaries}\label{sec: preliminaries}

\subsection{Definitions}

All graphs, mentioned in this article, are finite, have no loops and multiple edges. Let $G$ be a graph. We use $V(G)$ and $E(G)$ to denote the set of vertices and the set of edges of $G$, respectively. We denote by $|X|$ the cardinality of a finite set $X$. Let $v$ be a vertex of $G$, $N_G(v)$ (resp. $N_G[v]$) stands for the neighborhood of $v$ (resp. $N_G(v)\cup\{v\}$), if the context is clear the subscript $G$ will be omitted. We use $d_G(v)$ to denote the degree of $v$ in $G$, or $d(v)$ provided the context is clear. A vertex of degree $|V(G)|-1$ is called \emph{universal vertex}. By $\overline G$ we denote the complement graph of $G$. Given a set $F$ of edges of $G$ (resp. of $\overline G$), we denote by $G-F$ (resp. $G+F$) the graph obtained from $G$ by removing (resp. adding) all the edges in $F$. If $F=\{e\}$ we use $G-e$  (resp. $G+e$) for short. Let $X\subseteq V(G)$, we use $G[X]$ to denote the graph induced by $X$. By $G-X$ we denote the graph $G[V(G)\setminus X]$. If $X=\{v\}$ we use $G-v$ for short. Given $u,v\in V(G)$, a path $P$ of $G$ is said to be an \emph{$u,v$-path} if $u$ and $v$ are the endpoints of $P$. The \emph{distance} between $u$ and $v$ is the minimum number of edges of an $u,v$-path. The \emph{diameter} of $G$, denoted $D(G)$, is the maximum distance among all pair of vertices of $G$. A set of edges $S$, possibly empty, such that $G-S$ has more than one connected component is said to be a~\emph{disconnecting set}. The~\emph{edge-connectivity} of $G$, denoted $\kappa'(G)$, is the minimum size of a disconnecting set. We said that a set of edges $F$, possibly empty, is a \emph{$u,v$-disconnecting set} if $u$ is in a different connected component of $G-F$ from that in which is $v$. Let $A, B\subseteq V(G)$ we said that $A$ is \emph{complete to} (resp. \emph{anticomplete to}) $B$ if every vertex in $A$ is adjacent (resp. nonadjacent) to every vertex of $B$. We denote by $\kappa'(u,v)$ to the minimum size of a $u,v$-disconnecting set, and by $\lambda'(u,v)$ we denote to the maximum number of edge-disjoint $u,v$-paths. Notice that $\kappa'(G)$ is the minimum $\kappa'(u,v)$ among all pairs of vertices $u$ and $v$ of $G$. Clearly, $\lambda'(u,v)\ge\kappa'(u,v)$. Besides, it is well-known that $\lambda'(u,v)=\kappa'(u,v)$ (see for instance~\cite{West2000}). A set of pairwise nonadjacent vertices of $G$ is called an \emph{independent set} (or \emph{stable set}). The \emph{independence number} of $G$, denoted $\alpha(G)$, is the maximum cardinality of an independence number of $G$. A \emph{clique} is a set of pairwise adjacent vertices.

A \emph{complete graph} on $n$ vertices, denoted $K_n$, is a graph consisting of $n$ pairwise adjacent. A \emph{tree} is a connected and acyclic graph. By $K_{1,n-1}$ we denote the tree on $n$ vertices having a universal vertex. A \emph{leaf} of a tree is a vertex of degree one and a \emph{support vertex} in a tree is the only vertex adjacent to a leaf. Given two graphs $G$ and $H$, we use $G=H$ to denote that $G$ and $H$ are isomorphic graphs. A $k$-tree is defined inductively as follows: $K_k$ is a $k$-tree, adding a vertex to a $k$-tree, adjacent to a clique on $k$ vertices, is also a $k$-tree. A graph $G$ is \emph{distance-hereditary} if for every connected induced subgraph $H$ of $G$ and every pair of vertices in $H$ the distance between them in $H$ is the same as the distance in $G$. For more details about this graph class the reader is referred to~\cite{Oum2005} and all references therein. The \emph{tree-width} of a graph is the minimum $k$ for which there exists a $k$-tree $T_k$ such that $G$ is a subgraph of $T_k$. Notice that the tree-width of a tree is equal to one. This parameter is relevant from an algorithmic point of view as well as structural. There are other ways to define the tree-width of a graph that can be found in~\cite{BeinekeWilson}.

Let $G$ be a graph. We denote by $A(G)$ the adjacency matrix of $G$, and $\rho(G)$ stands for the spectral radius of $A(G)$, we refer to $\rho(G)$ as the spectral radius of $G$. Perron-Frobenius theorem implies that the principal eigenvector of $A(G)$ has all its entries either positive or negative. In addition, $\rho(G)$ coincides with the maximum eigenvalue of $G$. The reader is referred to~\cite[Ch. 6]{Bapat} for a simple proof of this observation. If $x$ is the principal eigenvector of $A(G)$ which is clearly indexed by $V(G)$, we use $x_u$ to denote the coordinate of $x$ corresponding to the vertex $u$.

\subsection{Some results}

Adding edges to a graph increases the spectral radius of a graph.

\begin{lem}\label{lem: adding edges}
If $G$ is a graph such that $uv\notin E(G)$, then $\rho(G)<\rho(G+uv)$. 
\end{lem}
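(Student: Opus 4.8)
The plan is to use the Rayleigh quotient characterization of the spectral radius together with the Perron--Frobenius theorem, being careful to choose the eigenvector associated with the \emph{smaller} graph $G$ rather than with $G+uv$. Recall that for any symmetric matrix $A$ we have $\rho(A)=\max_{\|y\|=1} y^\top A y$, and that the maximum is attained precisely at a (normalized) eigenvector associated with the largest eigenvalue. Since $A(G)$ has nonnegative entries and $G$ is connected (or, more carefully, since we only need the principal eigenvector to have nonnegative entries), Perron--Frobenius guarantees that the principal eigenvector $x$ of $A(G)$ can be taken with all entries nonnegative, and in fact strictly positive when $G$ is connected.

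The key computation is as follows. Let $x$ be the unit principal eigenvector of $A(G)$, so that $x^\top A(G) x = \rho(G)$. Observe that $A(G+uv) = A(G) + E_{uv}$, where $E_{uv}$ is the symmetric $0/1$ matrix with ones exactly in the $(u,v)$ and $(v,u)$ positions. Then I would apply the Rayleigh quotient bound to the matrix $A(G+uv)$ evaluated at this same vector $x$:
\[
\rho(G+uv) = \max_{\|y\|=1} y^\top A(G+uv)\, y \;\ge\; x^\top A(G+uv)\, x \;=\; x^\top A(G) x + 2\, x_u x_v \;=\; \rho(G) + 2\, x_u x_v.
\]
This is the correct direction: by testing the Rayleigh quotient of the \emph{larger} matrix against the eigenvector of the \emph{smaller} one, the perturbation term $2x_u x_v$ appears with a favorable sign, giving a lower bound on $\rho(G+uv)$ rather than on $\rho(G)$.

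It then remains to argue that the inequality is strict, i.e.\ that $x_u x_v > 0$. This is the step that requires the most care, and it is where I would invoke Perron--Frobenius most essentially. If $G$ is connected, its principal eigenvector has all strictly positive entries, so $x_u x_v > 0$ immediately and $\rho(G+uv) > \rho(G)$. If $G$ is disconnected, one must be slightly more careful, since an eigenvector realizing $\rho(G)$ may vanish on some components; in that case I would note that $\rho(G+uv) \ge \rho(G)$ always holds by the displayed inequality, and then observe that adding the edge $uv$ either strictly increases the spectral radius or, if it does not, leads to a contradiction with the maximality/uniqueness structure of the Perron eigenvector (for instance, by choosing $x$ to be positive on a component of maximum spectral radius and relocating the argument there). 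The cleanest route, which I expect the paper to take, is simply to assume $G$ is connected—consistent with the way the lemma is used throughout—so that the strict positivity of the Perron vector makes the term $2x_u x_v$ strictly positive and the strict inequality $\rho(G) < \rho(G+uv)$ follows at once.
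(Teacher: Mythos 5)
The paper states this lemma without proof, treating it as folklore, so there is no ``paper proof'' to match; judged on its own, your connected-case argument is the standard one and is correct. It is also exactly the technique the paper does spell out when proving its generalization, Lemma~\ref{lem: k-trees}: test the Rayleigh quotient of the larger adjacency matrix against the unit Perron vector $x$ of the smaller graph, obtaining $\rho(G+uv)\ge x^\top A(G+uv)x=\rho(G)+2x_ux_v$, and conclude strictness from the positivity of $x$. Note that for this lemma you do not even need the equality-case analysis that Lemma~\ref{lem: k-trees}'s proof requires, since here the perturbation term $2x_ux_v$ is itself strictly positive once $G$ is connected.

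The one genuine flaw is your hedged treatment of the disconnected case: the ``contradiction with the maximality/uniqueness structure of the Perron eigenvector'' you gesture at cannot be made to work, because the statement is simply false for disconnected graphs. Take $G=K_4\cup P_3$ and let $uv$ be the edge completing $P_3$ to a triangle: then $\rho(G)=\rho(G+uv)=3$, since $\rho(K_3)=2<3=\rho(K_4)$, so only the non-strict inequality $\rho(G+uv)\ge\rho(G)$ survives. Relocating the argument to a component of maximum spectral radius fails precisely because the added edge need not touch that component. So your closing instinct is the correct resolution, not merely the cleanest one: connectivity (or at least that $u$ or $v$ lies in a component realizing $\rho(G)$, which upon adding $uv$ yields a connected graph properly containing it) must be assumed, and this is implicitly how the paper uses the lemma, since every application there is to connected graphs.
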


Some results, in connections with finding those graphs that maximizes the spectral radius of a graph on $n$ vertices within a given class $\mathcal H$ of graphs, have been solved by means of graphs transformations that increases the spectral radius.  We refer to the reader to~\cite{Dragan2015} for more details about this and other techniques. Notice that if $\mathcal H$ contains the complete graphs, then $K_n$ maximizes $\rho(G)$ for every $G\in\mathcal H$, because of Lemma~\ref{lem: adding edges}. Lov\' asz and Pelik\'an in~\cite{LP1973} prove that the unique graph with maximum spectral radius among the trees on $n$ vertices is the star $K_{1,n-1}$ defining a partial order within the trees by means of their characteristic polinomials.

\begin{thm}\cite{LP1973}\label{thm: maximum-trees}
If $T$ is a tree on $n$ vertices, then $\rho(T)\le \sqrt{n-1}$. In addition, the equality holds if and only if $T=K_{1,n-1}$. 
\end{thm}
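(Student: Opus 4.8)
The plan is to establish both the bound and the equality characterization directly from the Perron eigenvector, avoiding the characteristic-polynomial machinery of the original argument. Let $x$ be the principal eigenvector of $A(T)$; by Perron--Frobenius all its entries are positive, and I would normalize so that $\max_{u} x_u = x_{u_0} = 1$ for some vertex $u_0$. The governing relation is the eigenvector equation $\rho(T)\, x_u = \sum_{v\in N(u)} x_v$, valid at every $u$. Applying it twice at $u_0$ gives
\[
\rho(T)^2 = \rho(T)^2 x_{u_0} = \sum_{v\in N(u_0)} \rho(T)\, x_v = \sum_{v\in N(u_0)}\ \sum_{w\in N(v)} x_w ,
\]
so that the quantity to be controlled is a sum over the walks of length two issuing from $u_0$.

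For the upper bound I would bound every $x_w$ by $1$, obtaining $\rho(T)^2 \le \sum_{v\in N(u_0)} d(v)$, and then prove the purely combinatorial inequality $\sum_{v\in N(u_0)} d(v) \le n-1$. Here acyclicity enters: since $T$ is a tree, $N(u_0)$ is an independent set (an edge inside $N(u_0)$ would close a triangle with $u_0$), so $\sum_{v\in N(u_0)} d(v)$ counts the $|N(u_0)|$ edges joining $N(u_0)$ to $u_0$ together with the edges joining $N(u_0)$ to $V(T)\setminus N[u_0]$; and each vertex outside $N[u_0]$ has at most one neighbour in $N(u_0)$, since two would create a four-cycle. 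Hence $\sum_{v\in N(u_0)} d(v)\le |N(u_0)| + (n-1-|N(u_0)|) = n-1$, which yields $\rho(T)\le\sqrt{n-1}$.

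The delicate part, which I expect to be the main obstacle, is the equality case. Equality forces two things at once: first, $x_w = 1$ for every endpoint $w$ of a length-two walk from $u_0$, that is, for $u_0$ itself and for every vertex at distance exactly $2$; second, the combinatorial inequality is tight, which I would translate into the statement that $u_0$ has eccentricity at most $2$ (no vertex lies at distance $\ge 3$). I then plan to derive a contradiction from the existence of any vertex $w$ at distance $2$: its neighbours lie at distance $1$ or $3$ from $u_0$, and since distance $3$ is excluded they all lie in $N(u_0)$; but a tree forbids $w$ from having two neighbours in $N(u_0)$, so $d(w)=1$. Substituting $x_w=1$ into the eigenvector equation at $w$ gives $x_{v}=\rho(T)=\sqrt{n-1}>1$ for its unique neighbour $v$ when $n\ge 3$, contradicting the normalization $\max_u x_u=1$. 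Thus no vertex lies at distance $2$, so $u_0$ is adjacent to all other vertices and $T=K_{1,n-1}$.

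Finally I would close the argument by checking the converse: for $T=K_{1,n-1}$ the symmetry among the leaves reduces the eigenvector equations to $\rho\, s=(n-1)t$ at the centre and $\rho\, t = s$ at a leaf, whence $\rho^2=n-1$ and $\rho(K_{1,n-1})=\sqrt{n-1}$. Together with the previous paragraph this shows that the star is the unique tree attaining the bound, which completes the proof.
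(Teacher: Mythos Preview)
Your argument is correct and self-contained: the walk-counting identity $\rho(T)^2=\sum_{v\in N(u_0)}\sum_{w\in N(v)}x_w$ combined with the tree facts (no triangles inside $N(u_0)$, no $4$-cycles through $u_0$) yields $\rho(T)^2\le n-1$ cleanly, and your equality analysis is sound, including the small cases.

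This is, however, a genuinely different route from the paper's. The paper does not bound $\rho(T)$ directly; instead it argues extremally. It takes a tree $T$ with maximum spectral radius among all trees on $n$ vertices and uses Lemma~\ref{lem: moving-vertices} (Rowlinson's edge-switching lemma): if $T$ had two distinct support vertices $u,v$ with $x_u\le x_v$, moving a leaf from $u$ to $v$ would produce another tree with strictly larger spectral radius, a contradiction. Hence the maximizer has a unique support vertex and must be $K_{1,n-1}$; the value $\sqrt{n-1}$ is then read off from the star. Your approach has the advantage of giving the inequality $\rho(T)\le\sqrt{n-1}$ for every tree in one stroke, without first identifying the extremal graph, and it needs no external lemma. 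The paper's approach, on the other hand, is the one that scales: the same edge-switching paradigm (via Lemma~\ref{lem: k-trees}) is what drives the later results on $k$-trees and block graphs, which is precisely why the authors present Theorem~\ref{thm: maximum-trees} this way.
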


Nevertheless, in order to easily prove this result, using the technique of graph transformations, the following result can be used. 

\begin{lem}\label{lem: moving-vertices}\cite{Rowlinson1990}
Let $G$ be a connected graph and let $u$ and $v$ two vertices of $G$ such that $x_u\le x_v$. If $\{v_1,\ldots,v_r\}\subseteq N(u)\setminus N(v)$, then 

\[\rho(G)<\rho(G-\{uv_1,\ldots,uv_r\}+\{vv_1,\ldots,vv_r\}).\]
\end{lem}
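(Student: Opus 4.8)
The plan is to prove the strict inequality by comparing Rayleigh quotients, using the principal eigenvector of the original graph as a test vector for the modified one. Write $G'=G-\{uv_1,\dots,uv_r\}+\{vv_1,\dots,vv_r\}$ and let $x$ be the principal eigenvector of $A(G)$, normalized so that $x^{T}x=1$. Since $G$ is connected, the Perron--Frobenius theorem guarantees $x_w>0$ for every vertex $w$. Because $A(G')$ is real symmetric, its spectral radius equals its largest eigenvalue, which is the maximum of the Rayleigh quotient over unit vectors; in particular $\rho(G')\ge x^{T}A(G')x$.

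First I would evaluate this test quotient explicitly. The matrices $A(G')$ and $A(G)$ differ only in the entries indexed by the pairs $(u,v_i)$ and $(v,v_i)$ together with their symmetric counterparts, the former decreased by one and the latter increased by one. Therefore $x^{T}A(G')x-x^{T}A(G)x=2\sum_{i=1}^{r}x_{v_i}(x_v-x_u)$. Since $x_{v_i}>0$ and, by hypothesis, $x_v\ge x_u$, this difference is nonnegative, and using $x^{T}A(G)x=\rho(G)$ I obtain $\rho(G')\ge\rho(G)$ at once.

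The main obstacle is upgrading this to a strict inequality, and the delicate case is $x_v=x_u$, where the displayed difference vanishes and the Rayleigh estimate alone gives only $\ge$. I would argue by contradiction: if $\rho(G')=\rho(G)$, then the inequalities above collapse to equalities, so $x$ is a unit vector attaining the maximum of the Rayleigh quotient of the symmetric matrix $A(G')$, hence a top eigenvector, giving $A(G')x=\rho(G')x=\rho(G)x$. Reading this eigenvalue identity at the coordinate $u$ and subtracting the corresponding identity $A(G)x=\rho(G)x$ yields $\sum_{i=1}^{r}x_{v_i}=0$, contradicting $x_{v_i}>0$ and $r\ge1$. This coordinate-$u$ computation is what makes the argument robust: it does not require $G'$ to be connected (indeed $u$ may lose all its neighbors and become isolated), so I avoid any appeal to uniqueness of a Perron vector for $G'$.

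A few points I would verify to be safe: that normalizing $x$ preserves the eigenvector property; that $\rho(G)$ coincides with the largest eigenvalue, so the Rayleigh characterization applies (both recorded in the preliminaries); and that a maximizer of the Rayleigh quotient of a symmetric matrix is genuinely an eigenvector for the largest eigenvalue, which is the single standard spectral fact on which the strictness step relies.
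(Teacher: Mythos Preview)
Your argument is correct and follows exactly the strategy the paper employs. The paper does not prove this lemma directly (it is cited from \cite{Rowlinson1990} with a pointer to \cite{Wu2005}), but it proves the generalization, Lemma~\ref{lem: k-trees}, by precisely the same method: use the Perron eigenvector $x$ of $G$ as a test vector, note that $x^{T}(A(G')-A(G))x\ge 0$ so $\rho(G')\ge\rho(G)$ via the Rayleigh quotient, and then, assuming equality, deduce that $x$ is also an eigenvector of $A(G')$ and compare the eigenvalue equation at the coordinate of a vertex that lost edges to obtain the contradiction $\sum_i x_{v_i}=0$. One small wording issue: the statement ``because $A(G')$ is real symmetric, its spectral radius equals its largest eigenvalue'' is not true for arbitrary symmetric matrices; here it holds because $A(G')$ is a nonnegative (adjacency) matrix, which you effectively acknowledge later when referring to the preliminaries.
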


Lemma~\ref{lem: moving-vertices} was proved by the first time in~\cite{Rowlinson1990} but for an easy proof the reader is referred to~\cite{Wu2005}. We would like to point out that Theorem~\ref{thm: maximum-trees} can be proved, using Lemma~\ref{lem: moving-vertices} by showing that if $T$ is a tree on $n$ vertices having the maximum spectral radius then there is only one support vertex. Otherwise there would exist two support vertices $u$ and $v$ in $T$ satisfying $x_u\le x_v$ and thus if $w$ is a leaf adjacent to $u$, and nonadjacent to $w$,  then $\rho(T)<\rho(T-uw+vw)$. Therefore, the tree having the maximum spectral radius is $K_{1,n-1}$ whose only support vertex is its vertex of degree $n-1$. Lemma~\ref{lem: moving-vertices} can be generalized and this generalization turns out to be helpful to deal with $k$-trees and block graphs as we will see in sections~\ref{sec: k-trees} and~\ref{sec: block graphs}. 

In the following lemma  we consider a set of vertices $u_1,\ldots,u_{\ell}$ of a graph $G$, where $x_i$ stands for $x_{u_i}$ for every $1\le i\le\ell$.

\begin{lem}\label{lem: k-trees}
Let $G$ be a connected graph and let $u_1,\ldots,u_k,u_{k+1},\ldots,u_{\ell}$ be vertices of $G$ such that
$\sum_{i=1}^k x_i\le \sum_{i=k+1}^{\ell} x_i$, and let $W\subseteq V(G)\setminus \{u_1,\ldots,u_{\ell}\}$. If $\{u_1,\ldots,u_k\}$ is complete to $W$ and $\{u_{k+1},\ldots,u_{\ell}\}$ is anticomplete to $W$, then 
\[\rho(G)<\rho(G-\{wu_i: w\in W\mbox{ and }1\le i\le k\}+\{wu_i: w\in W\mbox{ and }k+1\le i\le\ell\}).\]
\end{lem}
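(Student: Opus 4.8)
The plan is to compare $\rho(G)$ with $\rho(G')$, where $G'$ denotes the graph produced by the stated edge surgery, through the variational characterisation $\rho(H)=\max_{y\neq 0}\frac{y^\top A(H)\,y}{y^\top y}$, which holds for the adjacency matrix of any graph because $A(H)$ is symmetric and nonnegative, so its largest eigenvalue is its spectral radius. Let $x$ be the principal eigenvector of $A(G)$; by the Perron--Frobenius theorem recalled in the preliminaries I may take all its entries strictly positive, and it satisfies $A(G)x=\rho(G)x$ together with $\rho(G)=\frac{x^\top A(G)x}{x^\top x}$. The idea is then to feed this particular $x$ as a trial vector into the Rayleigh quotient for $G'$.

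Since $y^\top A(H)y=2\sum_{\{p,q\}\in E(H)}y_py_q$, deleting an edge $wu_i$ subtracts $2x_wx_i$ from $x^\top A(\cdot)x$ while inserting one adds $2x_wx_i$. The hypotheses that $\{u_1,\dots,u_k\}$ is complete to $W$ and $\{u_{k+1},\dots,u_\ell\}$ is anticomplete to $W$ guarantee that every edge removed is genuinely present and every edge added genuinely absent, so the bookkeeping is exact and
\[
x^\top A(G')x-x^\top A(G)x=2\sum_{w\in W}x_w\Big(\sum_{i=k+1}^{\ell}x_i-\sum_{i=1}^{k}x_i\Big).
\]
With $x_w>0$ and the hypothesis $\sum_{i=1}^{k}x_i\le\sum_{i=k+1}^{\ell}x_i$, the right-hand side is nonnegative, whence $\rho(G')\ge\frac{x^\top A(G')x}{x^\top x}\ge\frac{x^\top A(G)x}{x^\top x}=\rho(G)$. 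If the hypothesis happens to be strict (in particular whenever $k=0$, since then the left sum is $0$ while the right sum is positive), the middle inequality is strict as $W\neq\emptyset$, and we are done.

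The main obstacle is the boundary case $\sum_{i=1}^{k}x_i=\sum_{i=k+1}^{\ell}x_i$, where the computation only yields $\rho(G')\ge\rho(G)$ and strictness must be recovered separately; note this case forces $k\ge1$, since otherwise the left sum is empty and vanishes while the right sum is positive. I would argue by contradiction: if $\rho(G')=\rho(G)$, then equality holds along the whole chain, so $x$ maximises the Rayleigh quotient of $G'$ and is therefore an eigenvector for its top eigenvalue, i.e. $A(G')x=\rho(G)x$. Evaluating this eigenvalue equation at the vertex $u_1$, whose neighbourhood in $G'$ is exactly $N_G(u_1)\setminus W$ (all $|W|$ edges from $u_1$ to $W$ were deleted and none added there), gives
\[
\rho(G)x_{u_1}=\sum_{v\in N_{G'}(u_1)}x_v=\sum_{v\in N_G(u_1)}x_v-\sum_{w\in W}x_w=\rho(G)x_{u_1}-\sum_{w\in W}x_w,
\]
so $\sum_{w\in W}x_w=0$, contradicting $x_w>0$ over the nonempty set $W$. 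Hence $\rho(G')>\rho(G)$. The single point demanding care is the justification that equality in the Rayleigh bound forces $x$ to be a genuine eigenvector of $A(G')$ for its largest eigenvalue; this is precisely the equality condition in the variational principle for symmetric matrices, and it does not require $G'$ to be connected.
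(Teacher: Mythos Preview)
Your argument is correct and follows essentially the same route as the paper's proof: compute $x^\top(A(G')-A(G))x$ explicitly, apply the Rayleigh quotient to obtain $\rho(G')\ge\rho(G)$, and then rule out equality by observing that it would force $x$ to be a Perron eigenvector of $A(G')$, after which the eigenvalue equation at $u_1$ yields $\sum_{w\in W}x_w=0$. The only differences are cosmetic: you handle the strict-sum case separately and remark on the implicit assumptions $k\ge1$ and $W\neq\emptyset$, which the paper leaves tacit.
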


\begin{proof}
Let $x$ be the principal eigenvector of $G$ such that $x_i>0$  for every $1\le i\le |V(G)|$ and $\|x\|=1$ the existence of such principal eigenvector is guaranteed by Perron-Frobenius theorem. 

Let $G^*=G-\{wu_i: w\in W\mbox{ and }1\le i\le k\}+\{wu_i: w\in W\mbox{ and }k+1\le i\le\ell\}$. Hence,
\begin{equation}
x^t(A(G^*)-A(G))x  = 2\sum_{w\in W}x_w \left(\sum_{i=k+1}^{\ell}x_i-\sum_{i=1}^k x_i\right),
\end{equation} 
Since $\sum_{i=1}^k x_i\le \sum_{i=k+1}^{\ell} x_i$ and $x_w>0$ for every $w\in W$, $x^t(A(G^*)-A(G))x\ge 0$ and thus 

\begin{equation}\label{eq: spectral radius}
\begin{split}
\rho(G^*)&=\max_{y:\|y\|=1} y^t A(G^*)y\\
&\ge x^t A(G^*)x\\
&\ge x^t A(G)x =\rho(G).
\end{split}
\end{equation}

Suppose, towards a contradiction, that $\rho(G^*)=\rho(G)$. By Inequality~\eqref{eq: spectral radius}, $A(G^*)x=\rho(G^*)x$. Thus, on the one hand, 

\begin{equation}\label{eq: rho star}
\rho(G)x_1 =\sum_{uu_1\in E(G^*)}x_u+\sum_{w\in W}x_w,
\end{equation}

and on the other hand

\begin{equation}\label{eq: rho}
\rho(G^*)x_1 =\sum_{uu_1\in E(G^*)}x_u.
\end{equation}

Since $x_1>0$ and $x_w>0$ for all $w\in W$, equations~\eqref{eq: rho star} and~\eqref{eq: rho} implies $\rho(G^*)<\rho(G)$, a contradiction. The contradiction arose from supposing that $\rho(G)=\rho(G^*)$. Therefore, $\rho(G)<\rho(G^*)$.
\end{proof}

It is worth mentioning that Lemma~\ref{lem: k-trees} was presented in~\cite{LuLin2015} by Lu and Lin but in an slightly different way. They prove that $\rho(G)\le\rho(G^*)$ when $\vert W\vert=1$ and that the inequality is strict when  $\sum_{i=1}^k x_i< \sum_{i=k+1}^{\ell} x_i$. So, we decided to write the proof  for the sake of completion.

\section{Nonregular distance-hereditary graphs}\label{sec: DH graphs}

We will proceed to show a lower bound for $\Delta(G)-\rho(G)$ when $G$ is a connected nonregular distance-hereditary graph.

\begin{thm}\label{thm: lower bound DH}
Let $G$ be a connected nonregular distance-hereditary graph of diameter $D$, then

\[\Delta(G)-\rho(G)\ge \frac{(\vert V(G)\vert\Delta(G)-2\vert E(G)\vert)\kappa'(G)}{\vert V(G)\vert (D(G)(\vert V(G)\vert\Delta(G)-2\vert E(G)\vert)+\kappa'(G))}.\]	
\end{thm}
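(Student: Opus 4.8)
The plan is to split the argument into a purely spectral estimate that holds for any nonregular graph, and a structural spread estimate that is where the distance-hereditary hypothesis and the edge-connectivity really enter. Throughout I write $n=\vert V(G)\vert$, $m=\vert E(G)\vert$, and $s=n\Delta(G)-2m=\sum_{v}(\Delta(G)-d(v))\ge 0$, and I let $x$ be the positive unit Perron eigenvector supplied by Perron--Frobenius, with largest and smallest coordinates $\eigenmax$ and $\eigenmin$. Since $G$ is nonregular, $s>0$ and $\eigenmax>\eigenmin>0$. Summing the eigenvalue equations $\rho(G)x_v=\sum_{z\sim v}x_z$ over all $v$ and using $\sum_v\sum_{z\sim v}x_z=\sum_v d(v)x_v$ gives the identity
\[(\Delta(G)-\rho(G))\sum_{v}x_v=\sum_{v}(\Delta(G)-d(v))x_v.\]
Bounding the right-hand side below by $\eigenmin\sum_v(\Delta(G)-d(v))=s\,\eigenmin$ (every summand is nonnegative) and the factor $\sum_v x_v$ above by $n\,\eigenmax$ yields the general estimate
\[\Delta(G)-\rho(G)\ \ge\ \frac{s\,\eigenmin}{n\,\eigenmax}.\]

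With this in hand the theorem reduces to a single inequality, namely $\dfrac{\eigenmin}{\eigenmax}\ge\dfrac{\kappa'(G)}{D(G)s+\kappa'(G)}$, which is equivalent to the spread bound
\[\kappa'(G)\,(\eigenmax-\eigenmin)\ \le\ D(G)\,s\,\eigenmin.\]
Indeed, substituting this into the displayed estimate produces exactly $\frac{s\,\kappa'(G)}{n(D(G)s+\kappa'(G))}$, and the special case $\kappa'(G)=1$ recovers Theorem~\ref{thm: cioba}. So everything comes down to proving the spread bound, and that is where I expect the genuine work to lie.

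To attack the spread bound I would root a breadth-first search at a vertex $q$ with $x_q=\eigenmin$; then a vertex $p$ with $x_p=\eigenmax$ lies in some layer $L_t$ with $t\le D(G)$, and every edge joins equal or consecutive layers. For each $1\le i\le t$ the set of edges between $L_{i-1}$ and $L_i$ is the edge boundary of the ball $L_0\cup\cdots\cup L_{i-1}$, which separates $q$ from $p$; hence it is a disconnecting set and contains at least $\kappa'(G)$ edges. This is the mechanism meant to create the factor $\kappa'(G)$: telescoping $\eigenmax-\eigenmin=x_p-x_q$ along a $q$--$p$ path that crosses the $i$-th layer boundary through an edge of smallest ``rise'' makes the contribution of that cut at most $\frac{1}{\kappa'(G)}$ of the total rise across it. Summing over the at most $D(G)$ cuts, and controlling the total rise by the degree-deficiency budget $s\,\eigenmin$ coming from the eigenvalue equation at the minimum vertex, where $\sum_{z\sim q}(x_z-\eigenmin)=(\rho(G)-d(q))\eigenmin\le(\Delta(G)-d(q))\eigenmin$, propagated outward layer by layer, should deliver $\kappa'(G)(\eigenmax-\eigenmin)\le D(G)s\,\eigenmin$.

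The main obstacle is exactly this last synthesis. Two points are delicate. First, the Perron vector need not be monotone along the BFS layers, so the rises crossing a cut may have mixed signs; one must bound the total positive rise across each cut by the global deficiency $s\,\eigenmin$ rather than simply comparing vertex values, and this is where I anticipate needing the rigidity of distance-hereditary graphs (the laminar, twin-like structure of their BFS levelings) instead of mere connectivity. Second, one must actually route a single $q$--$p$ path realizing the minimum-rise edge of every cut it meets; in an arbitrary graph such routing can fail, and again it is the highly structured adjacency between consecutive levels of a distance-hereditary graph that I would exploit to guarantee it, or else invoke Menger's theorem ($\lambda'(q,p)=\kappa'(q,p)\ge\kappa'(G)$) to replace the single path by $\kappa'(G)$ edge-disjoint short $q$--$p$ paths and average over them. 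Once the spread bound is secured, the theorem follows by the substitution above.
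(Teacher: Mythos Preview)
Your proposal is not a proof: it reduces the theorem to the spread bound
\[
\kappa'(G)\,(\eigenmax-\eigenmin)\ \le\ D(G)\,s\,\eigenmin,
\]
and then \emph{does not prove it}. You describe a BFS/cut scheme, flag two ``delicate'' obstacles (non-monotonicity of the Perron vector across layers; routing a single path that hits the minimum-rise edge of every cut), and leave them unresolved. The only concrete estimate you write down, $\sum_{z\sim q}(x_z-\eigenmin)=(\rho(G)-d(q))\eigenmin\le(\Delta(G)-d(q))\eigenmin$, bounds the rise at the root by the \emph{local} deficiency $(\Delta-d(q))\eigenmin$, not by the global deficiency $s\,\eigenmin$; the ``propagated outward layer by layer'' step that is supposed to bridge this is not an argument. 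Nor do you ever say how the distance-hereditary hypothesis is actually used.

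The paper's proof is entirely different, and much simpler. It starts from the \emph{quadratic} identity
\[
\Delta(G)-\rho(G)=\sum_{u}(\Delta(G)-d(u))x_u^{2}+\sum_{uv\in E(G)}(x_u-x_v)^{2},
\]
not your linear one. The edge term is bounded below using Menger's theorem: there are $\ell\ge\kappa'(G)$ edge-disjoint paths $P_1,\dots,P_\ell$ from a min-coordinate vertex to a max-coordinate vertex, and the sole use of the distance-hereditary hypothesis is that each $P_i$, being an induced path in a connected induced subgraph, has length at most $D(G)$. Cauchy--Schwarz along each $P_i$ then gives
\[
\sum_{uv\in E(G)}(x_u-x_v)^2\ \ge\ \frac{\kappa'(G)}{D(G)}(\eigenmax-\eigenmin)^2.
\]
The vertex term is bounded by $s\,\eigenmin^{2}$, and minimizing the resulting quadratic $s\,t^2+\tfrac{\kappa'(G)}{D(G)}(\eigenmax-t)^2$ over $t$ (together with $\eigenmax^{2}\ge 1/n$) yields the stated bound. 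No spread inequality of your form is needed, and no structural analysis of BFS layers enters at all. If you want to repair your route, the Menger-plus-Cauchy--Schwarz step is exactly the replacement for your unproved spread bound; but at that point you have essentially rewritten the paper's argument, and your detour through the linear identity is unnecessary.
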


\begin{proof}
Let $x$ be the principal eigenvector of $A(G)$; i.e. $A(G)x=\rho(G)x$. In addition $x$ is chosen with all its entries positive and $\|x\|=1$. Thus

\begin{equation}\label{eq: igualdad base}
\Delta(G)-\rho(G)=\sum_{u\in V(G)}(\Delta(G)-d_G(u))x_u^2+\sum_{uv\in E(G)}(x_u-x_v)^2
\end{equation}

Details on Eq.~\eqref{eq: igualdad base} can be found in~\cite[Page 53]{Dragan2015}. Let $a$ be a vertex of $G$ corresponding to the minimum principal eigenvalue component, denoted $\eigenmin$, and let $b$ be a vertex of $G$ corresponding to a maximum principal eigenvector component, denoted $\eigenmax$.

Since $\lambda'(a,b)\ge\kappa'(G)$, there exist $\ell$ edge-disjoint $a,b$-paths $P_1,\ldots,P_{\ell}$ with $\ell\ge \kappa'(G)$ (see~\cite[Page 168]{West2000}). We use this fact to give a lower bound to the second term of Eq.~\eqref{eq: igualdad base}. Notice also that if $D=D(G)$, since $G$ is a distance-hereditary graph, then $\vert E(P_i)\vert\le D$ for every $1\le i\le\ell$. Hence,

\begin{equation}\label{eq: desigualdad I}
\begin{split}
\sum_{uv\in E(G)}(x_u-x_v)^2&\ge \sum_{i=1}^\ell\left(\sum_{uv\in E(P_i)} (x_u-x_v)^2\right)\\
&\ge\sum_{i=1}^\ell\frac{1}{\vert E(P_i)\vert}\left(\sum_{uv\in E(P_i)}(x_u-x_v)\right)^2\\
&\ge\frac{\ell}{D}(\eigenmax-\eigenmin)^2\\
&\ge \frac{\kappa'(G)}{D}(\eigenmax-\eigenmin)^2.
\end{split}
\end{equation}

The second relation is a consequence of Cauchy-Schwarz inequality. Ciob\u a et al. notice that, since $G$ is nonregular and thus $\Delta(G)-d(u)>0$ for at least one vertex $u\in V(G)$. The following inequality, proved in~\cite{Cioba2007},  holds
\begin{equation}\label{eq: desigualdad II}
\sum_{u\in V(G)}(\Delta(G)-d(u))x_u^2\ge (\vert V(G)\vert \Delta(G)-2\vert E(G)\vert)\eigenmin^2.
\end{equation} 
Combining~\eqref{eq: igualdad base},~\eqref{eq: desigualdad I} and~\eqref{eq: desigualdad II} it follows
\begin{equation}\label{eq: desigualdad III}
\Delta(G)-\rho(G)\ge (\vert V(G)\vert \Delta(G)-2\vert E(G)\vert)\eigenmin^2 + \frac{\kappa'(G)}{D}(\eigenmax-\eigenmin)^2.
\end{equation}
If we consider the quadratic function 
\[f(x)=(\vert V(G)\vert \Delta(G)-2\vert E(G)\vert)x^2 + \frac{\kappa'(G)}{D}(\eigenmax-x)^2,\]

we can see that its minimum is reached at $\widehat{x}=\frac{\kappa'(G)\eigenmax}{D(\vert V(G)\vert \Delta(G)-2\vert E(G)\vert)+\kappa'(G)}$. Since $f(\widehat{x})=\frac{\kappa'(G)}{D(\vert V(G)\vert \Delta(G)-2\vert E(G)\vert)+\kappa'(G)}$, it follows from inequality~\eqref{eq: desigualdad III} that

\[\Delta(G)-\rho(G)\ge\frac{\kappa'(G)(\vert V(G)\vert\Delta(G)-2\vert E(G)\vert)}{D(\vert V(G)\vert \Delta(G)-2\vert E(G)\vert)+\kappa'(G)}.\]

\end{proof}

\begin{rmk}
Notice that our lower bound improves the lower bound of Theorem~\ref{thm: cioba mejorado} whenever $D(\vert V(G)\vert \Delta(G)-2\vert E(G)\vert)>2$, $\kappa'(G)>2$, and $G$ is a connected nonregular distance-hereditary graph.  
\end{rmk}

\section{Graphs with bounded tree-width}\label{sec: k-trees}

A \emph{simplicial} vertex of a graph $G$ is a vertex $v$ such that $N(v)$ is a clique. Notice that if $G$ is a $k$-tree and $v$ is a simplicial vertex of $G$ then $|N[v]|=k+1$. We use $S_{k,n-k}$ to denote the graph on $n$ vertices whose vertex set can be partitioned into a clique $Q$ on $k$ vertices and an independent set $I$ on $n-k$ vertices, and $I$ is complete to $Q$. Notice that $S_{k,n-k}$ is a $k$-tree and every $k$-tree distinct of a complete graph  has at least two nonadjacent simplicial vertices.


\begin{thm}\label{thm: tree-width}~\cite{Hong2004}
If $G$ is a graph on $n$ vertices with tree-width equals $k$, then
\[\rho(G)\le\frac{k-1+\sqrt{4kn-(k+1)(3k-1)}}{2}.\]
In addition, the equality holds if and only if $G=S_{k,n-k}$. 
\end{thm}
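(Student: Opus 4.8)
The plan is to show that among all graphs on $n$ vertices with tree-width $k$, the spectral radius is maximized uniquely by $S_{k,n-k}$, and then to compute $\rho(S_{k,n-k})$ explicitly to obtain the stated closed form. First I would observe that since adding edges strictly increases the spectral radius (Lemma~\ref{lem: adding edges}) and the class of graphs with tree-width $k$ contains, as its maximal members, exactly the $k$-trees, it suffices to restrict attention to $k$-trees: any graph of tree-width $k$ is a subgraph of some $k$-tree on the same vertex set, so the maximizer must itself be a $k$-tree. Thus the problem reduces to proving that $S_{k,n-k}$ is the unique $k$-tree on $n$ vertices of maximum spectral radius.

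The heart of the argument is a transformation step using Lemma~\ref{lem: k-trees}. Let $T$ be a $k$-tree on $n$ vertices of maximum spectral radius, with principal eigenvector $x$. If $T\neq S_{k,n-k}$, then $T$ is not the graph in which all $n-k$ simplicial-type vertices attach to a single common $k$-clique, so I would exhibit two distinct cliques serving as neighborhoods and ``move'' simplicial vertices from the one with smaller eigenvector-weight sum onto the one with larger sum. Concretely, since every $k$-tree other than $K_{k+1}$ has at least two nonadjacent simplicial vertices, pick simplicial vertices $v$ and $v'$ with $N(v)=\{u_1,\dots,u_k\}$ and $N(v')=\{u_{k+1},\dots,u_{2k}\}$ two cliques; assuming $\sum_{i=1}^k x_{u_i}\le\sum_{i=k+1}^{2k}x_{u_i}$ and taking $W$ to be the set of simplicial vertices attached to the first clique, Lemma~\ref{lem: k-trees} (with $\ell=2k$) yields a graph $T^*$ with $\rho(T)<\rho(T^*)$. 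The key point to verify is that $T^*$ is again a $k$-tree on $n$ vertices, so this contradicts maximality unless no such move is possible, i.e.\ unless all simplicial vertices share the same neighborhood clique, which forces $T=S_{k,n-k}$.

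Once uniqueness of the maximizer is established, it remains to compute $\rho(S_{k,n-k})$. By symmetry the principal eigenvector takes one value $a$ on each of the $k$ clique vertices $Q$ and another value $b$ on each of the $n-k$ independent vertices $I$. Writing the eigenvalue equations $A(S_{k,n-k})x=\rho x$ at a vertex of $Q$ and at a vertex of $I$ gives the two-dimensional system
\begin{align}
\rho a &= (k-1)a+(n-k)b,\\
\rho b &= k a,
\end{align}
whose consistency condition is the quadratic $\rho^2-(k-1)\rho-k(n-k)=0$. Solving for the larger root yields
\[
\rho(S_{k,n-k})=\frac{k-1+\sqrt{(k-1)^2+4k(n-k)}}{2},
\]
and expanding $(k-1)^2+4k(n-k)=4kn-(k+1)(3k-1)$ matches the bound in the statement exactly.

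The main obstacle I expect is the reduction and the bookkeeping in the transformation step: I must argue carefully that the maximizer is a $k$-tree (not merely a graph of tree-width $k$), that two suitable simplicial vertices with \emph{clique} neighborhoods always exist when $T\neq S_{k,n-k}$, and crucially that the transformed graph $T^*$ remains a valid $k$-tree so that the contradiction with maximality is legitimate. The spectral computation at the end is routine once the structural characterization is in hand.
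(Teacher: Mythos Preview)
Your approach is essentially the paper's: reduce to $k$-trees via Lemma~\ref{lem: adding edges}, use Lemma~\ref{lem: k-trees} to relocate a simplicial vertex and contradict maximality unless all simplicial vertices share the same $k$-clique neighborhood, then compute $\rho(S_{k,n-k})$ from the obvious two-variable eigensystem. The computation at the end matches the paper exactly.

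There is one technical slip worth flagging. You write $N(v)=\{u_1,\dots,u_k\}$ and $N(v')=\{u_{k+1},\dots,u_{2k}\}$ and invoke Lemma~\ref{lem: k-trees} with $\ell=2k$, implicitly assuming the two neighborhoods are disjoint. In a $k$-tree they need not be: two nonadjacent simplicial vertices can satisfy $0<|N(v)\cap N(v')|<k$. The paper handles this by comparing $\sum_{w\in N(u)\setminus N(v)}x_w$ with $\sum_{w\in N(v)\setminus N(u)}x_w$ and applying Lemma~\ref{lem: k-trees} to those symmetric differences; the contradiction hypothesis is precisely $|N(u)\cap N(v)|<k$, which guarantees both differences are nonempty. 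A second, smaller difference: the paper takes $W=\{u\}$, i.e.\ it moves a \emph{single} simplicial vertex $u$ so that its new neighborhood becomes $N(v)$. This makes the verification that $G^*$ is again a $k$-tree immediate (you have simply reattached one simplicial vertex to another $k$-clique), whereas your choice of $W$ as all simplicial vertices on the first clique requires a bit more checking. Neither point is a serious obstacle, but you should adjust the set-up to the symmetric differences before the argument goes through cleanly.
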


\begin{proof}
Suppose that $G$ is a $k$-tree on $n$ vertices, having maximum spectral radius among all $k$-trees on $n$ vertices. Suppose, towards a contradiction, that  $G$ has two simplicial vertices $u$ and $v$ such that $\vert N(u)\cap N(v)\vert <k$ (see Fig.~\ref{fig: ktree}). Assume, without losing generality, that $\sum_{w\in N(u)\setminus N(v)}x_w\le\sum_{w\in N(v)\setminus N(u)}x_w$. Therefore, by Lemma~\ref{lem: k-trees}, $\rho(G)<\rho(G^*)$, where $G^*=G-\{wu\in E(G):\mbox{ }w\in N(u)\setminus N(v)\}+\{wv\in E(G):\mbox{ }w\in N(u)\setminus N(v)\}$, since $G^*$ is a $k$-tree, this contradicts that $G$ is the $k$-tree having maximum spectral radius. The contradiction arose from supposing that $u$ and $v$ are two simplicial vertices of $G$ such that $\vert N(u)\cap N(v)\vert <k$. Hence, since $G$ is a $k$-tree, $|N(u)\cap N(v)|=k$.  Consequently, every vertex $x\in V(G)\setminus N(u)$ is complete to $N(u)$. Therefore, $G=S_{k,n-k}$.  We have already proved that $S_{k,n-k}$ is the unique graph that maximizes the spectral radius among all $k$-trees.  

\begin{figure}
\centering
    \includegraphics[scale=0.6]{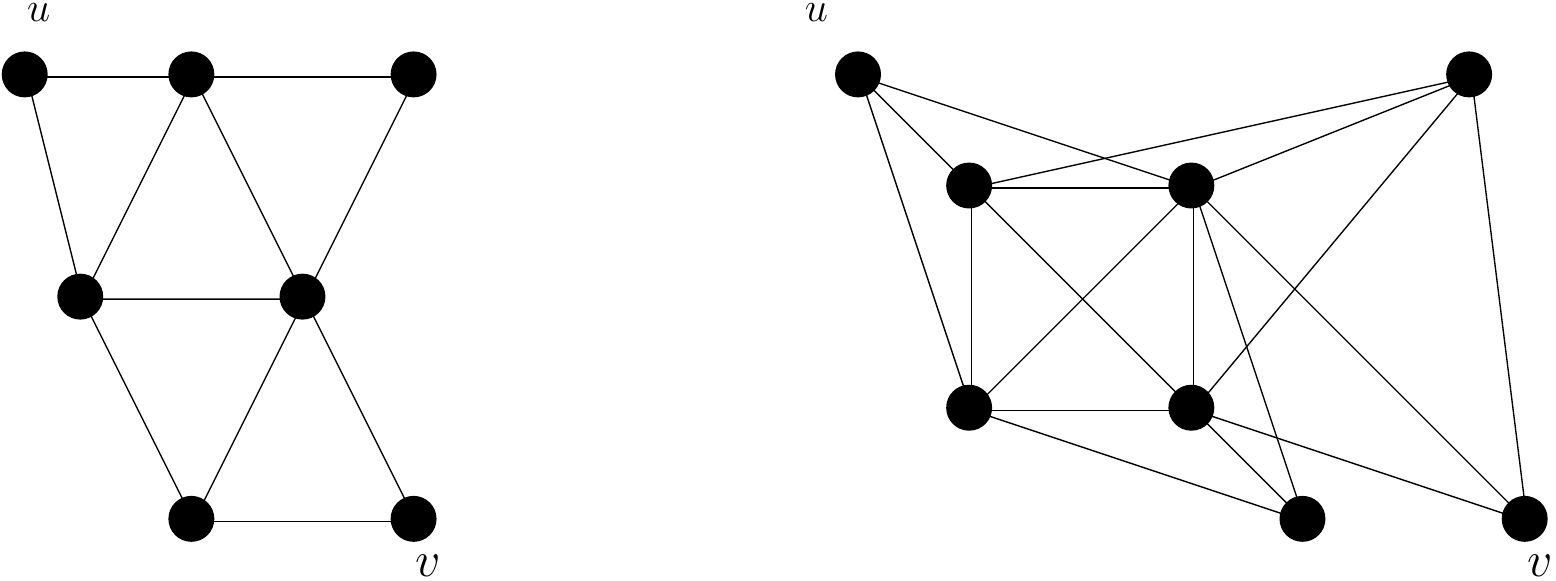}   
    \caption{The graph on the left is a $2$-tree and the graph on the right is a $3$-tree. Both graph marked vertices $u$ and $v$ satisfying $|N(u)\cap N(v)|<k$ for every $k\in\{2,3\}$.}
    \label{fig: ktree}
\end{figure}

Consider any graph $G$ having tree-width equal to $k$. Hence there exists a $k$-tree $H$ for which $G$ is a subgraph of it. By Lemma~\ref{lem: adding edges} and the conclusion of the above paragraph $\rho(G)\le\rho(H)\le \rho(S_{k,n-k})$. To finish the proof we only need to compute the spectral radius of $S_{k,n-k}$. By symmetry, assume that all of the coordinates of the  principal eigenvector corresponding to the universal vertices of $S_{k,n-k}$ are equal to $x$ and all of the coordinates of the principal eigenvector  corresponding to the vertices of degree $k$  are equal to $y$.  Hence $kx=\lambda y$ and $(k-1)x+(n-k)y=\lambda x$, where $\lambda=\rho(S_{k,n-k})$.  If $n\neq k$, then $x(\lambda^2-(k-1)\lambda-(n-k)k)=0$. Since $x>0$, 

\[\lambda=\frac{k-1+\sqrt{4kn-(k+1)(3k-1)}}{2}.\]

Notice that this formula holds even when $n=k$.
\end{proof}

\begin{rmk}
Theorem~\ref{thm: tree-width} generalizes Theorem~\ref{thm: maximum-trees}.
\end{rmk}

\section{Block graphs with given independence number}\label{sec: block graphs}

The adjacecy matrix of block graph were studied by Bapat and Souvik in~ \cite{brs-2014}.Throughout of this section we will need some definitions and concepts introduced next. A vertex $v$ of a graph $G$ is a \emph{cut vertex} if $G-v$ has a number of connected components greater than the number of connected components of $G$. We use $\Si_1(G)$ to denote the set of simplicial vertices of $G$. 

Let $H$ be a graph. A \emph{block} of $H$ is a maximal subgraph $B$ of $H$ having no cut vertex in $B$. Blocks are also known as 2-connected components. A \emph{block graph} is a connected graph whose blocks are complete graphs. Notice that trees are block graphs. A \emph{simplicial block} of $H$ is a block $B$ having at least a simplicial vertex in $H$. Let $G$ be a block graph,  a \emph{leaf block} is a block of $G$ such that contains exactly one cut vertex of $G$. Notice that every vertex but one, in a leaf block, is a simplicial vertex. Notice that a noncomplete block graph is distance-hereditary.

\begin{figure}
\centering
    \includegraphics[scale=0.6]{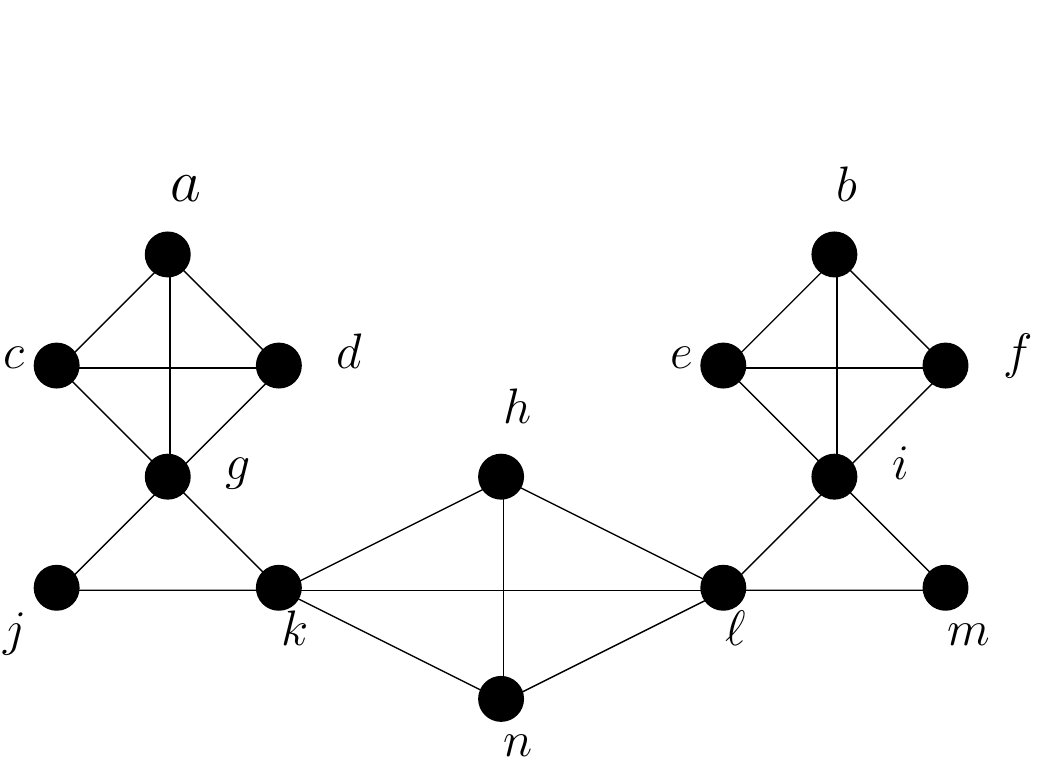}   
    \caption{A block graph, each of its blocks is a simplicial one and its has two leaf blocks.}
    \label{fig: block1}
\end{figure}

Next we will state and prove technical results needed to demonstrate the main statement of this section.

\begin{lem}\label{lem: independent set simpliciales}
Let $G$ be a block graph and let $B$ a simplicial block of $G$. If $S$ is a maximum independent set of $G$, then $\vert S\cap V(B)\vert=1$. In addition, such a maximum independent set $S$ can be chosen so that $S\cap V(B)=\{v\}$, where $v$ is any simplicial vertex of $G$ such that $N_G[v]=V(B)$.
\end{lem}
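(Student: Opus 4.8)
The plan is to exploit two structural features of block graphs: that every block is a clique, and that a simplicial vertex lies in a single block whose vertex set is exactly its closed neighborhood. First I would record that $|S\cap V(B)|\le 1$ for \emph{every} independent set $S$, simply because $V(B)$ induces a complete graph and hence contains no two nonadjacent vertices. Thus the only thing to rule out, for a maximum independent set, is the case $S\cap V(B)=\emptyset$.

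Since $B$ is a simplicial block it contains a simplicial vertex $v$ of $G$, and I would first establish the identity $N_G[v]=V(B)$. The inclusion $V(B)\subseteq N_G[v]$ holds because $B$ is complete. Conversely, any neighbor $w$ of $v$ yields an edge $vw$ lying in some block that contains $v$; but $v$ cannot be a cut vertex (otherwise two of its neighbors belonging to distinct blocks would be nonadjacent, contradicting that $N_G(v)$ is a clique), so $v$ belongs to the unique block $B$ and therefore $w\in V(B)$.

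With $N_G[v]=V(B)$ in hand, the case $S\cap V(B)=\emptyset$ is immediate to dismiss: then neither $v$ nor any of its neighbors lies in $S$, so $S\cup\{v\}$ is an independent set strictly larger than $S$, contradicting maximality. Hence $|S\cap V(B)|=1$. For the additional clause, given any simplicial vertex $v$ with $N_G[v]=V(B)$, I would write $S\cap V(B)=\{u\}$ and, if $u\neq v$, perform the exchange $S'=(S\setminus\{u\})\cup\{v\}$. Because $N_G(v)=V(B)\setminus\{v\}$ while $S\setminus\{u\}$ is disjoint from $V(B)$, the vertex $v$ has no neighbor in $S'$; thus $S'$ is independent, of the same (maximum) cardinality as $S$, and satisfies $S'\cap V(B)=\{v\}$.

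The only genuinely structural point, and therefore the main thing to get right, is the identity $N_G[v]=V(B)$ for a simplicial vertex $v$, namely that such a vertex of a block graph belongs to exactly one block and that this block coincides with its closed neighborhood. Once this is available, both the ``add $v$ when $V(B)$ is missed'' argument and the exchange argument reduce to one-line clique computations.
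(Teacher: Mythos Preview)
Your proof is correct and follows essentially the same approach as the paper: use that $V(B)$ is a clique to get $|S\cap V(B)|\le 1$, use $N_G[v]=V(B)$ and maximality of $S$ to rule out $|S\cap V(B)|=0$, and then perform the exchange $S'=(S\setminus\{u\})\cup\{v\}$ for the second clause. The only difference is one of detail: the paper simply asserts the identity $N_G[v]=V(B)$ for a simplicial vertex $v$ of a block graph, whereas you supply a short argument for it.
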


\begin{proof}
Consider a maximum independent set $S$ and let $B$ be a simplicial block of $G$. Hence there exists a vertex  $v\in\Si_1(G)$ such that $V(B)=N_G[v]$. Thus there exists a vertex $w\in V(B)\cap S$, because of the maximality of $S$, since otherwise $S\cup\{v\}$ would be an independent set. Therefore, $\vert V(B)\cap S\vert=1$. In addition, since $S$ is a maximum independent set, $S'=S\setminus\{w\}\cup\{v\}$ is also a maximum independent set. We can proceed in this way with each simplicial block in order to obtain a maximum independent set as stated in the lemma. 
\end{proof}

\begin{cor}\label{cor: maximum independence number leaf block}
Let $G$ be a block graph. Then, there exists a maximum independent set $S$ such that $S\cap V(B)=\{v\}$ for each leaf block $B$ of $G$, where $N_G[v]=V(B)$.
\end{cor}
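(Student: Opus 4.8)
The plan is to build a single maximum independent set by combining the local swaps of Lemma~\ref{lem: independent set simpliciales} across all leaf blocks at once, the only real difficulty being that distinct leaf blocks may share their (unique) cut vertex. First I would record two easy structural facts. Since every leaf block $B$ contains a simplicial vertex of $G$ (indeed all but its unique cut vertex are simplicial), $B$ is a simplicial block, so Lemma~\ref{lem: independent set simpliciales} applies to it and gives $|S\cap V(B)|=1$ for every maximum independent set $S$. Next, for each leaf block $B_i$ fix the designated simplicial vertex $v_i$ with $N_G[v_i]=V(B_i)$; because a simplicial vertex lies in exactly one block and two distinct blocks meet in at most a cut vertex, the vertices $v_1,\dots,v_t$ (where $B_1,\dots,B_t$ are the leaf blocks) are pairwise distinct, pairwise nonadjacent, and each $v_i$ belongs only to $B_i$.

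The heart of the argument is to produce a maximum independent set containing all of $v_1,\dots,v_t$ simultaneously. Starting from an arbitrary maximum independent set $S_0$, let $w_i$ be the unique vertex of $S_0\cap V(B_i)$. The key claim I would prove is that the $w_i$ are pairwise distinct. If $w_i=w_j$ for $i\neq j$, then this common vertex lies in $V(B_i)\cap V(B_j)$, forcing it to be the cut vertex $c$ shared by the two leaf blocks. But then $S_0\cap V(B_i)=S_0\cap V(B_j)=\{c\}$, so neither $v_i$ nor $v_j$ nor any of their neighbors lies in $S_0\setminus\{c\}$, whence $(S_0\setminus\{c\})\cup\{v_i,v_j\}$ is an independent set of size $|S_0|+1$, contradicting maximality. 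Hence no cut vertex shared by two leaf blocks can belong to a maximum independent set, and the $w_i$ are distinct.

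With the $w_i$ distinct, I would form $S=(S_0\setminus\{w_1,\dots,w_t\})\cup\{v_1,\dots,v_t\}$ and check it is as required. Independence follows because each $v_i$ has all its neighbors inside $V(B_i)$, the only vertex of $S_0$ in $V(B_i)$ was $w_i$, which has been removed, and the newly inserted $v_j$ with $j\neq i$ lie outside $V(B_i)$ and are pairwise nonadjacent; thus $S$ is independent, and since we removed $t$ vertices and added $t$ distinct ones, $|S|=|S_0|$, so $S$ is still maximum. Finally $v_i\in S\cap V(B_i)$ together with the cardinality-one statement of Lemma~\ref{lem: independent set simpliciales} yields $S\cap V(B_i)=\{v_i\}$ for every leaf block, as claimed.

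I expect the main obstacle to be exactly the disjointness of the swaps: the naive ``process one block at a time'' reading of Lemma~\ref{lem: independent set simpliciales} can have a later swap destroy the intersection achieved at an earlier block when two leaf blocks share a cut vertex, so the enlargement argument ruling out a shared cut vertex in any maximum independent set is the step that makes the simultaneous modification well defined.
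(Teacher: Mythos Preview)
Your proof is correct and, at bottom, follows the same idea as the paper: since every leaf block is a simplicial block, one swaps each $w_i\in S_0\cap V(B_i)$ for the designated simplicial vertex $v_i$, exactly as in the proof of Lemma~\ref{lem: independent set simpliciales}. The paper's own argument is a one-line appeal to that lemma, relying on the closing sentence of its proof (``we can proceed in this way with each simplicial block''), whereas you carry out the swap simultaneously and supply the bookkeeping the paper omits.

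One remark on your final paragraph: your worry that the sequential version could ``destroy the intersection achieved at an earlier block'' is unfounded, and in fact the same observation you use to justify the simultaneous swap dispatches it. After block $B_j$ has been processed, $S\cap V(B_j)=\{v_j\}$, so the unique cut vertex $c_j$ of $B_j$ is no longer in $S$ and is never reinserted (every later insertion is a simplicial $v_k$). Hence at a later step $k$ the vertex $w_k\in S\cap V(B_k)$ cannot be the shared cut vertex of $B_j$ and $B_k$, and therefore $w_k\notin V(B_j)$; removing it and adding $v_k\notin V(B_j)$ leaves $S\cap V(B_j)=\{v_j\}$ intact. So the iterative reading of Lemma~\ref{lem: independent set simpliciales} really does work; your distinctness-of-$w_i$ argument is simply a cleaner packaging of the same fact.
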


\begin{proof}
It suffices to notice that if $B$ is a leaf block of $G$, then $B$ is a simplicial block of $G$. Therefore, the result immediately follows from Lemma~\ref{lem: independent set simpliciales}.
\end{proof}



Let $G$ be a block graph. We use $\Le(G)$ to denote the set of vertices of $G$ belonging to any leaf block. Let $B$ be a block of $G$. We use $L(B)$ to denote the set of simplicial vertices of those leaf blocks of $G$ having exactly one vertex in common with $V(B)$. By $\ell_G(B)$ we denote the number of these leaf blocks. When the context is clear enough we use $\ell(B)$ for short. In the graph depicted in Fig.~\ref{fig: block1}, if $B$ is the block induced by $\{g,j,k\}$, then $L(B)=\{a,c,d\}$ and $\ell(B)=1$; and in the graph depicted in Fig.~\ref{fig: block2}, if $B$ is the block induced by $\{g,j,k\}$, then $L(B)=\{a,c,d,n\}$ and $\ell(B)=2$.

\begin{cor}\label{cor: leaf block formula}
If $G$ is a block graph and  $B$ is a leaf block of $G$, then 
\[\alpha(G)=\alpha(G-B)+1.\] 
\end{cor}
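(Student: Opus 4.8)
The plan is to prove the identity $\alpha(G)=\alpha(G-B)+1$ by establishing the two inequalities separately, using the structural facts already developed about simplicial blocks and maximum independent sets. First I would set up notation: let $B$ be a leaf block of $G$, so that $B$ is complete and contains exactly one cut vertex $c$ of $G$; every other vertex of $B$ is simplicial. Since $G-B$ is obtained by deleting the vertices of $B$, the cut vertex $c$ disappears as well, so I must be careful about how the two pieces interact through $c$.

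For the inequality $\alpha(G)\ge\alpha(G-B)+1$, I would take any maximum independent set $T$ of $G-B$ and enlarge it. Pick a simplicial vertex $v$ of the leaf block with $N_G[v]=V(B)$; its only neighbors in $G$ lie inside $V(B)$, none of which belong to $G-B$. Hence $T\cup\{v\}$ is an independent set of $G$ of size $\alpha(G-B)+1$, giving the desired bound. The only subtlety is confirming that $v$ has no neighbor in $V(G)\setminus V(B)$, which follows because $v$ is simplicial and not the cut vertex, so $N_G(v)\subseteq V(B)\setminus\{v\}$.

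For the reverse inequality $\alpha(G)\le\alpha(G-B)+1$, I would invoke Corollary~\ref{cor: maximum independence number leaf block}: there is a maximum independent set $S$ of $G$ with $S\cap V(B)=\{v\}$ for a simplicial vertex $v$ with $N_G[v]=V(B)$. Then $S\setminus\{v\}$ is an independent set contained in $V(G)\setminus V(B)=V(G-B)$, and it remains independent in $G-B$ since removing vertices and edges cannot create adjacencies. Thus $\alpha(G-B)\ge |S\setminus\{v\}|=\alpha(G)-1$, i.e. $\alpha(G)\le\alpha(G-B)+1$. Combining the two inequalities yields the claimed equality.

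The main obstacle I anticipate is the careful handling of the cut vertex $c$ shared between $B$ and the rest of the graph. One must verify that the chosen simplicial vertex $v$ in the leaf block is genuinely $v\neq c$, so that $v$ truly has no neighbors outside $B$, and that deleting all of $V(B)$ (including $c$) does not inadvertently isolate or alter the independence structure of the remaining graph in a way that breaks the count. Because Lemma~\ref{lem: independent set simpliciales} and its corollary already guarantee a maximum independent set meeting $V(B)$ in exactly one simplicial vertex distinct from the cut vertex, this technical point is resolved cleanly, and the bookkeeping reduces to the two short containment arguments above.
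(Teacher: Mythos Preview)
Your proof is correct and follows essentially the same approach as the paper, which simply says the result follows from Lemma~\ref{lem: independent set simpliciales}; you have spelled out in full the two inequalities that the paper leaves implicit, invoking the same lemma (via its leaf-block corollary) for the direction $\alpha(G)\le\alpha(G-B)+1$ and giving the obvious extension argument for the reverse inequality.
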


\begin{proof}
The proof follows from Lemma~\ref{lem: independent set simpliciales}.
\end{proof}

\begin{lem}\label{lem: indpendent set}
Let $G$ be a block graph and let $B$ be a leaf block of $H=G-(\Le(G)\cap \Si_1(G))$, where $v$ is its only cut vertex of $H$ in $V(B)$. Then, the following conditions hold:
\begin{enumerate}
\item $\alpha(G)=\alpha(G-(V(B)\cup L(B)))+\ell(B)+1$, if $V(B)$ has a simplicial vertex in $G$.
\item $\alpha(G)=\alpha(G-((V(B)\cup L(B))\setminus\{v\}))+\ell(B)$, if $V(B)$ has no simplicial vertex in $G$.
\end{enumerate}
\end{lem}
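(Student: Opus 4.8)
The plan is to evaluate $\alpha(G)$ by exhibiting, through Lemma~\ref{lem: independent set simpliciales} and Corollary~\ref{cor: maximum independence number leaf block}, a maximum independent set $S$ whose intersection with each relevant block is controlled, and then counting $|S\cap(V(B)\cup L(B))|$. Write $V(B)=\{v,w_1,\dots,w_m\}$, where $w_1,\dots,w_m$ are the non-cut vertices of the leaf block $B$ of $H$, that is, the vertices of $V(B)$ that are simplicial in $H$. First I would record two structural facts. Since $V(B)$ is a clique contained in a unique block $B_G$ of $G$, and any vertex of $B_G\setminus V(B)$ would be a simplicial vertex of $G$ deleted in forming $H$ whose presence would contradict the maximality of $B$ as a block of $H$, we get $B_G=V(B)$; hence $B$ is a block of $G$ and $L(B),\ell(B)$ are meaningful for it. Moreover, in a block graph every vertex is either simplicial or a cut vertex, and if $w_i$ is not simplicial in $G$ then every block of $G$ other than $B$ that contains $w_i$ is a leaf block meeting $V(B)$ only in $w_i$.

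For the bound that $\alpha(G)$ is at most the claimed right-hand side in each case I would choose $S$ so that $S\cap V(B')$ is a single simplicial vertex $z_{B'}\in L(B)$ for every leaf block $B'$ counted by $\ell(B)$. In the first case $B$ is itself a simplicial block, since it contains a simplicial vertex $u$ of $G$, so I also impose $S\cap V(B)=\{u\}$, which is consistent because $u$ is adjacent only inside $V(B)$; then $S$ meets $V(B)\cup L(B)$ exactly in $\{u\}\cup\{z_{B'}\}$, a set of size $\ell(B)+1$, and deleting it leaves an independent set of $G-(V(B)\cup L(B))$ of size $\alpha(G)-\ell(B)-1$. In the second case no vertex of $V(B)$ is simplicial in $G$, so by the structural fact each $w_i$ is the cut vertex of a counted leaf block and the choice of $S$ forces every $w_i\notin S$; hence $S$ meets $(V(B)\cup L(B))\setminus\{v\}$ exactly in $\{z_{B'}\}$, of size $\ell(B)$, and deleting it leaves an independent set of $G-((V(B)\cup L(B))\setminus\{v\})$ of size $\alpha(G)-\ell(B)$.

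For the reverse bound I would start from a maximum independent set $T$ of the appropriate reduced graph and enlarge it. In the first case every vertex of $V(B)$ has been deleted, so adding $u$ together with one simplicial vertex $z_{B'}$ from each counted leaf block produces an independent set of $G$: each added vertex is adjacent only to deleted vertices, namely inside $V(B)$ for $u$ and inside its own leaf block for $z_{B'}$, and the added vertices are pairwise nonadjacent. The enlargement has size $\alpha(G-(V(B)\cup L(B)))+\ell(B)+1$, matching the forward bound and yielding the first identity.

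The second case is where I expect the genuine obstacle. Now $v$ is retained in the reduced graph, so enlarging $T$ by a simplicial vertex of a leaf block meeting $V(B)$ at $v$ would destroy independence exactly when $v\in T$. The identity therefore hinges on the leaf blocks counted by $\ell(B)$ being precisely those hanging off $V(B)\setminus\{v\}$---equivalently, off the vertices simplicial in $H$---so that each added $z_{B'}$, adjacent only within its own leaf block and to a cut vertex $w_i\ne v$, is nonadjacent to $v$ and the enlargement succeeds irrespective of whether $v\in T$. A leaf block of $G$ incident to $v$ is then not consumed here; it survives through the retained vertex $v$ and is accounted for later in the recursion. Settling this role of $v$, and checking that keeping it is exactly what prevents double counting against $v$, is the heart of the matter; granting it, adding the $\ell(B)$ vertices $z_{B'}$ to $T$ gives an independent set of size $\alpha(G-((V(B)\cup L(B))\setminus\{v\}))+\ell(B)$, which matches the forward bound and yields the second identity.
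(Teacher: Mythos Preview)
Your route is the paper's: pick, via Corollary~\ref{cor: maximum independence number leaf block}, a maximum independent set $S$ meeting every leaf block of $G$ in a single simplicial vertex, and then count $|S\cap(V(B)\cup L(B))|$. You are more thorough than the paper on two points: you verify that $B$ is actually a block of $G$ (so that $L(B)$ and $\ell(B)$ are defined), and you argue both inequalities explicitly, whereas the paper only records the forward count and then asserts that ``the result holds''. For case~1 your argument is complete and correct.

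Your hesitation in case~2 is not over-caution; it points to a genuine defect that the paper's one-line argument does not address. Under the literal definition, $\ell(B)$ counts every leaf block of $G$ meeting $V(B)$ in one vertex, including those attached at $v$, and then the reverse inequality can fail. Take $G$ to be the tree on $\{a,p,q,b,c,d\}$ with edge set $\{ap,pq,qb,bc,qd\}$. Then $H=G-\{a,c,d\}$ is the path with edges $pq,qb$, the leaf block $B=\{p,q\}$ of $H$ has $v=q$, neither $p$ nor $q$ is simplicial in $G$, and $\ell(B)=2$ (counting the leaf blocks $\{a,p\}$ and $\{q,d\}$); the formula gives $\alpha(G-\{a,p,d\})+2=2+2=4$, while $\alpha(G)=3$. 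Thus case~2 is correct only under the reading you propose, namely that $\ell(B)$ counts only leaf blocks attached at vertices of $V(B)\setminus\{v\}$; with that reading your enlargement step succeeds and the proof is complete. The paper's subsequent use of the lemma, where $v_i$ is introduced as ``the only cut vertex in $V(B_i)$ that does not belong to a leaf block of $G$'', is consistent with this intended interpretation.
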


\begin{proof}
Let $G$ be a block graph and let $v$  the only cut vertex of $H$ in $V(B)$. Notice that $H$ is the graph obtained from $G$ by removing every simplicial vertex belonging to a leaf block of $G$. By Corollary~\ref{cor: maximum independence number leaf block}, $G$ has a maximum independent set $S$ such that if $B'$ is any leaf block of $G$ having a simplicial vertex $w$, then $V(B')\cap S=\{w\}$. Hence if $V(B)$ has a simplicial vertex in $G$, then $v\notin S$. Therefore, the only vertices of $V(B)\cup L(B)$ in $S$ are those simplicial vertices in a leaf block of $G$ having a vertex in common with $V(B)$ and exactly one of the simplicial vertices of $G$ in $V(B)$, the remaining vertices of $S$ are in $V(G)\setminus(V(B)\cup L(B))$ and the result holds. If $V(B)$ does not have any simplicial vertex of $G$, then $(V(B)\cup L(B))\setminus\{v\}$ has in $S$ exactly one vertex for each leaf block of $G$ having a vertex in common with $V(B)$ and $v$ might belong or not to $S$, thus the second statement holds. 
\end{proof}

\begin{figure}
\centering
    \includegraphics[scale=0.6]{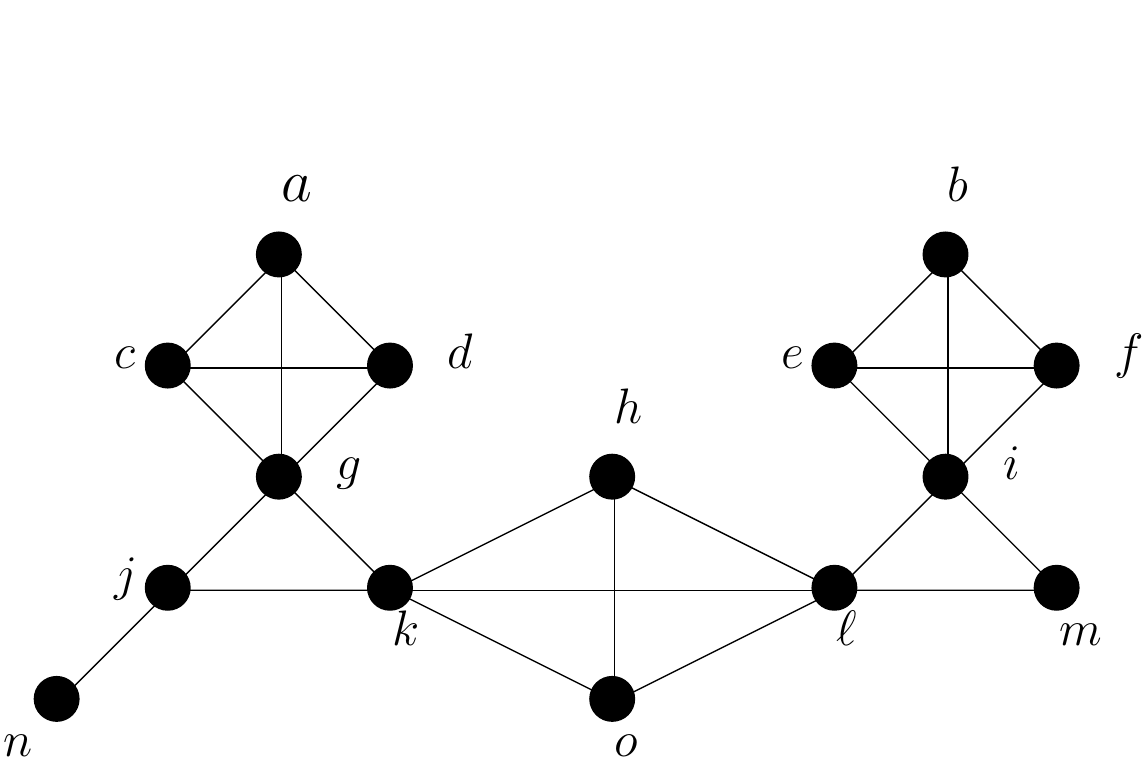}   
    \caption{Block graphs with three leaf blocks, two nonleaf simplicial blocks and a nonsimplicial block.}
    \label{fig: block2}
\end{figure}

The following lemma will allow to describe with more precision the structure of those block graphs with prescribed independence number having maximum spectral radius. Recall that two blocks in a graph have at most one vertex in common, which is also a cut vertex.

\begin{lem}\label{lem: bloques separados}
If $G$ is a block graph with maximum spectral radius among all block graphs with independence number $\alpha$, and $B_1$ and $B_2$ are leaf blocks of $G-(\Le(G)\cap\Si_1(G))$, then $\vert V(B_1)\cap V(B_2)\vert =1$.
\end{lem}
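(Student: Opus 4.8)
The plan is to argue by contradiction. Since two distinct blocks of a graph share at most one vertex, the negation of $|V(B_1)\cap V(B_2)|=1$ is $V(B_1)\cap V(B_2)=\varnothing$, so I assume $B_1$ and $B_2$ are vertex-disjoint. Let $v_1,v_2$ be the unique cut vertices of $H=G-(\Le(G)\cap\Si_1(G))$ lying in $B_1,B_2$, let $x$ be the positive principal eigenvector of $G$, and relabel so that $x_{v_1}\le x_{v_2}$. I then relocate the whole pendant branch rooted at $B_1$ by setting
\[
G^{\ast}=G-\{v_1w:\ w\in V(B_1)\setminus\{v_1\}\}+\{v_2w:\ w\in V(B_1)\setminus\{v_1\}\}.
\]
Because $B_1$ and $B_2$ are disjoint, no $w\in V(B_1)\setminus\{v_1\}$ is adjacent to $v_2$ (the blocks of $w$ are $B_1$ and its attached leaf blocks of $G$, none of which contains $v_2$), so $V(B_1)\setminus\{v_1\}\subseteq N(v_1)\setminus N(v_2)$ and Lemma~\ref{lem: moving-vertices} with $u=v_1$, $v=v_2$ yields $\rho(G)<\rho(G^{\ast})$. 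The internal edges of $B_1$ are untouched, so $V(B_1)\setminus\{v_1\}$ together with $v_2$ forms a new complete block, the leaf blocks of $G$ hanging at those vertices ride along unchanged, and $v_1$ keeps only its edges into the rest of $H$; hence $G^{\ast}$ is a block graph on $n$ vertices.

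To contradict the maximality of $G$ I must show $\alpha(G^{\ast})=\alpha$, and this is where Lemma~\ref{lem: indpendent set} is used. Peeling $B_1$ shows that the branch $V(B_1)\cup L(B_1)$ contributes a fixed amount (namely $\ell(B_1)+1$ in Case~1 and $\ell(B_1)$ in Case~2) to $\alpha(G)$, all of it supplied by simplicial vertices of $G$, and — in Case~1 — that a maximum independent set $S$ can be taken with $v_1\notin S$. Since $B_1,B_2$ are disjoint the two peelings are independent, so by Corollary~\ref{cor: maximum independence number leaf block} and Lemma~\ref{lem: indpendent set} one may choose $S$ with $v_1,v_2\notin S$. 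The only edges altered by the relocation are incident with $v_2$, and $v_2\notin S$, so $S$ remains independent in $G^{\ast}$; this gives the easy inequality $\alpha(G^{\ast})\ge\alpha(G)$. The relocated block of $G^{\ast}$ has cut vertex $v_2$ and exactly the same internal structure, simplicial vertices and attached leaf blocks as $B_1$, so applying the same peeling identity of Lemma~\ref{lem: indpendent set} inside $G^{\ast}$ reproduces the same contribution and is what must yield the matching bound $\alpha(G^{\ast})\le\alpha(G)$.

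The main obstacle is precisely this upper bound. Relocating the branch can a priori leave the vacated cut vertex $v_1$ free to enter a larger independent set: if, after deleting both branches, $v_1$ is forced into every maximum independent set of the remaining core of $H$ (for instance when that core is an odd path with $v_1$ at an end), then stripping the branch off $v_1$ in $G$ and off $v_2$ in $G^{\ast}$ leaves residual graphs of different independence number and $\alpha(G^{\ast})$ strictly exceeds $\alpha$. The clean resolution I would pursue is to prove, via Lemma~\ref{lem: indpendent set} applied directly to $G^{\ast}$, that every maximum independent set of $G^{\ast}$ may be chosen to avoid $v_2$ and hence restricts to an independent set of $G$ of equal size, i.e.\ that the two residual graphs do share the same independence number; the delicate point is to show this residual equality always holds under the disjointness hypothesis. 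Failing that, the fallback is to append, using Lemma~\ref{lem: adding edges}, one edge merging the freed vertex $v_1$ into an incident complete block of $G^{\ast}$, which increases $\rho$ once more and restores the independence number to $\alpha$ while keeping the graph a block graph. In every case $\rho$ increases strictly within the class of $n$-vertex block graphs with independence number $\alpha$, contradicting the maximality of $G$ and forcing $|V(B_1)\cap V(B_2)|=1$.
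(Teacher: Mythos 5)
Your setup (reduction to the disjoint case, the strict increase of $\rho$ via Lemma~\ref{lem: moving-vertices}, and the verification that $G^{\ast}$ is again a block graph) is sound, but the step you yourself flag as ``the main obstacle'' is a genuine, fatal gap: the uniform relocation of the whole branch from $v_1$ to $v_2$ does \emph{not} preserve the independence number, so the contradiction never materializes. Concretely, let $G$ consist of two triangles $\{v_1,s_1,t_1\}$ and $\{v_2,s_2,t_2\}$ joined by the edge $v_1v_2$, with pendant vertices $p_1$ at $t_1$ and $p_2$ at $t_2$. Then $H=G-(\Le(G)\cap\Si_1(G))=G-\{p_1,p_2\}$, the two triangles are disjoint leaf blocks of $H$ with cut vertices $v_1,v_2$, and $\alpha(G)=4$ (the four cliques $\{v_1,s_1\},\{t_1,p_1\},\{v_2,s_2\},\{t_2,p_2\}$ cover $V(G)$). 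By symmetry $x_{v_1}=x_{v_2}$, so your transformation applies and gives $G^{\ast}=G-\{v_1s_1,v_1t_1\}+\{v_2s_1,v_2t_1\}$; but $\{v_1,s_1,p_1,s_2,p_2\}$ is independent in $G^{\ast}$, so $\alpha(G^{\ast})\ge 5>\alpha(G)$ and $G^{\ast}$ lies outside the class of block graphs with independence number $\alpha$. This example also kills your ``clean resolution'': the residual equality you hope to prove is exactly what fails here, because the vacated vertex $v_1$ becomes genuinely free. Your fallback (re-absorbing $v_1$ by adding edges) happens to work in this example --- it amounts to turning $\{v_1,v_2,s_1,t_1\}$ into a $K_4$ --- but as stated it is only a sketch: you would need to prove that a suitable incident block always exists, that the merge lowers $\alpha$ by exactly one precisely when the relocation raised it, and that it preserves the block structure; a single added edge does not merge a vertex into a block with more than two vertices.

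The paper avoids this trap by splitting into three cases according to whether $V(B_1)$ and $V(B_2)$ contain simplicial vertices of $G$, and in each case it chooses a transformation that never leaves the vacated cut vertex bare. When $B_1$ carries simplicial vertices $S_1$, it moves only $V(B_1)\setminus(S_1\cup\{v_1\})$ into $B_2$, using Lemma~\ref{lem: k-trees} with the \emph{group} comparison $\sum_{a\in S_1}x_a+x_{v_1}\le\sum_{a\in S_2}x_a+x_{v_2}$ rather than the single-vertex comparison forced by Lemma~\ref{lem: moving-vertices}; the set $S_1\cup\{v_1\}$ stays behind as a new leaf block covering $v_1$, and Lemma~\ref{lem: indpendent set} together with Corollary~\ref{cor: leaf block formula} then gives $\alpha(G^{\ast})=\alpha$ exactly. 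A whole-branch move like yours is used only when the block being moved has no simplicial vertex of $G$ (Case 3 and one subcase of Case 2), since item 2 of Lemma~\ref{lem: indpendent set} shows the vacated cut vertex was already ``free'' beforehand, so $\alpha$ is unchanged. In the example above, the paper's Case 1 transformation detaches only $t_1$ and leaves the leaf block $\{v_1,s_1\}$ hanging at $v_1$, and indeed $\alpha$ remains $4$. Repairing your argument essentially requires importing this case distinction, i.e., the freedom that Lemma~\ref{lem: k-trees} gives to select \emph{which} vertices to relocate.
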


\begin{proof}
Suppose, towards a contradiction, that $V(B_1)\cap V(B_2)=\emptyset$. Assume that $v_i$ is the only cut vertex in $V(B_i)$ that does not belong to a leaf block of $G$, for each $i\in\{1,2\}$. We are going to split the proof into three cases. Let $x$ be a principal eigenvector of $G$ having all its coordinates positive.

\vspace{0.5cm}
\emph{Case 1: $V(B_i)\cap\Si_1(G)\neq\emptyset$ for each $i\in\{1,2\}$}.
\vspace{0.5cm}

Let $S_i=V(B_i)\cap\Si_1(G)$ for each $i\in\{1,2\}$. Consider for instance the graph depicted in Fig.~\ref{fig: block1} where those blocks playing the roles of $B_1$ and $B_2$ are those induced by $\{g,j,k\}$  and  $\{i,\ell,m\}$, respectively. In this case $v_1=k$, $v_2=\ell$,  $S_1=\{j\}$ and $S_2=\{m\}$.

By Lemma~\ref{lem: indpendent set} we know that

\[\alpha=\alpha\left(G-\left(\bigcup_{i=1}^2(V(B_i)\cup L(B_i))\right)\right)+\ell_G(B_1)+\ell_G(B_2)+2.\]

Assume, without losing generality, that $\sum_{a\in S_1} x_a+x_{v_1}\le \sum_{a\in S_2}x_a+x_{v_2}$. We construct a graph $G^*$ from $G$ as follows. We delete every edge $sv$ with $s\in S_1\cup\{v_1\}$ and $v\in V(B_1)\setminus( S_1\cup \{v_1\})$  and then we add every edge  $vw$ with $v\in V(B_1)\setminus( S_1\cup \{v_1\})$ and $w\in V(B_2)$. Clearly, $G^*$ is a block graph and its block $B'$ whose vertex set is $(V(B_1)\setminus( S_1\cup \{v_1\}))\cup V(B_2)$ has at least a simplicial vertex because $V(B_2)$ has a simplicial vertex in $G$, and the block $B''$ induced by $S_1\cup\{v_1\}$ in $G^*$ is a leaf block of $G^*$. Besides, $B'$ is a leaf block of $G^*-(\Le(G^*)\cap \Si_1(G^*))$ having a simplicial vertex and $\ell_{G^*}(B')=\ell_G(B_1)+\ell_G(B_2)$. By Lemmas~\ref{lem: adding edges} and~\ref{lem: k-trees}, $\rho(G)<\rho(G^*)$. In virtue of Lemma~\ref{lem: indpendent set} and Corollary~\ref{cor: leaf block formula} applied to $B''$
\begin{align*}
\alpha(G^*) & =\alpha(G^*-V(B''))+1\\
			&=\alpha((G^*-V(B''))\setminus(V(B')\cup L(B')))+\ell_{G^*}(B')+2\\
						& =  \alpha\left(G-\left(\bigcup_{i=1}^2(V(B_i)\cup L(B_i))\right)\right)+\ell_G(B_1)+\ell_G(B_2) +2.
\end{align*}
We reach a contradiction.

\vspace{0.5cm}
\emph{Case 2:  Exactly one of $V(B_1)$ or $V(B_2)$ has a simplicial vertex of $G$.}
\vspace{0.5cm}

Assume, without losing generality, that $V(B_1)$ has at least one simplicial vertex of $G$ and $V(B_2)\cap\Si_1(G)=\emptyset$. Let $S=V(B_1)\cap\Si_1(G)$

Consider for instance the graph depicted in Fig.~\ref{fig: block2} where those blocks playing the roles of $B_1$ and $B_2$ are those induced by $\{i,\ell,m\}$  and $\{g,j,k\}$, respectively. In this case $v_1=\ell$, $v_2=k$ and $S=\{m\}$.

By Lemma~\ref{lem: indpendent set} we know that
\[\alpha=\alpha\left(G-\left(\left(\bigcup_{i=1}^2(V(B_i)\cup L(B_i))\right)\setminus\{v_2\}\right)\right)+\ell_G(B_1)+\ell_G(B_2)+1.\]

If $x_{v_2}\le \sum_{a\in S}x_a+x_{v_1}$, then the block graph $G^*$ obtained by deleting every edge $bv_2$ with $b\in (V(B_2)\setminus\{v_2\})$ and by adding every edge $bv_1$ with $b\in (V(B_2)\setminus\{v_2\})$ satisfies, by Lemma~\ref{lem: moving-vertices}, that $\rho(G)<\rho(G^*)$.  Notice that $B_1$ is a block of $G^*$ having at least one simplicial vertex such that $\ell_{G^*}(B_1)=\ell_{G}(B_1)$, and $B'_2=G^*[(B_2-v_2)\cup\{v_1\}]$ is a block of $G^*$ having no simplicial vertices such that $\ell_{G^*}(B'_2)=\ell_G(B_2)$. Besides, both of $B'_1$ and $B'_2$ are leaf blocks of $G^*\setminus(\Le(G^*)\cap\Si_1(G^*))$, where $B'_1=B_1$, and thus by Lemma~\ref{lem: indpendent set}
\begin{align*}
\alpha(G^*)&=\alpha\left( G^*-\left(\bigcup_{i=1}^2(V(B'_i)\cup L(B'_i))\right)\right)+\ell_{G^*}(B'_1)+\ell_{G^*}(B'_2)+1\\
					 &=\alpha\left(G-\left(\left(\bigcup_{i=1}^2(V(B_i)\cup L(B_i))\right)\setminus\{v_2\}\right)\right)+\ell_{G}(B_1)+\ell_{G}(B_2)+1.
\end{align*}
Thus we reach a contradiction.

Suppose now that $x_{v_2}\ge\sum_{a\in S}x_a+x_{v_1}$.  We construct a block graph $G^*$ from $G$ as follows. We delete every edge $sv$ with $s\in S\cup\{v_1\}$ and $v\in V(B_1)\setminus (S\cup\{v_1\})$,  and we add every edge $vw$ with $v\in V(B_1)\setminus (S\cup\{v_1\})$  and $w\in V(B_2)$. Clearly, the block $B'$ of $G^*$ whose vertex set is $V(B_1- (S\cup\{v_1\}))\cup V(B_2)$ has no simplicial vertex of $G^*$ and $\ell_{G^*}(B')=\ell_G(B_1)+\ell_G(B_2)$, and the block $B''$ induced in $G^*$ by $S\cup\{v_1\}$  is a leaf block. By Lemmas~\ref{lem: adding edges} and~\ref{lem: k-trees}, $\rho(G)<\rho(G^*)$. By Lemma~\ref{lem: indpendent set} and Corollary~\ref{cor: leaf block formula} 
\begin{align*}
\alpha(G^*)&=\alpha((G^*-V(B''))\setminus((V(B')\cup L(B'))\setminus\{v_2\}))+\ell_{G^*}(B')+1\\
					 &=\alpha\left(G-\left(\left(\bigcup_{i=1}^2(V(B_i)\cup L(B_i))\right)\setminus\{v_2\}\right)\right)+\ell_G(B_1)+\ell_G(B_2)+1.	
\end{align*}

We reach a contradiction.

\vspace{0.5cm}

\emph{Case 3:  $V(B_i)$ has no simplicial vertex of $G$ for each $i\in\{1,2\}$.} 

\vspace{0.5cm}

Consider for instance the graph depicted in Fig.~\ref{fig: block3} where those blocks playing the roles of $B_1$ and $B_2$ are those induced by $\{g,j,k\}$ and $\{i,\ell,m\}$, respectively. In this case $v_1=k$ and $v_2=\ell$.

By Lemma~\ref{lem: indpendent set} we know that
\[\alpha=\alpha(G)=\alpha\left(G-\left(\left(\bigcup_{i=1}^2(V(B_i)\cup L(B_i))\right)\setminus\{v_1,v_2\}\right)\right)+\ell_G(B_1)+\ell_G(B_2).\]
Assume, without losing generality, that $x_{v_1}\ge x_{v_2}$. We transform $G$ into the block graph $G^*$ by deleting every edge $v_2u$ with $u\in V(B_2)\setminus\{v_2\}$ and adding every edge $v_1u$ with $u\in V(B_2)\setminus\{v_2\}$. By Lemma~\ref{lem: moving-vertices}, $\rho(G)<\rho(G^*)$. Let define the blocks $B'_1$ and $B'_2$ of $G^*$ as those induced by $V(B_1)$ and $V((B_2-v_2)\cup\{v_1\})$, respectively. In addition, $B'_1$ and $B'_2$ are blocks of $G^*-(\Le(G^*)\cap \Si_1(G^*))$ such that $\ell_{G^*}(B'_i)=\ell_G(B_i)$ for each $i\in\{1,2\}$. Besides, by Lemma~\ref{lem: indpendent set}, 
\begin{align*}
\alpha(G^*)	&=\alpha\left(G^*-\left(\left(\bigcup_{i=1}^2(V(B'_i)\cup L(B'_i))\right)\setminus\{v_1\}\right)\right)+\ell_{G^*}(B'_1)+\ell_{G^*}(B'_2)\\
						&=\alpha\left(G-\left(\left(\bigcup_{i=1}^2(V(B_i)\cup L(B_i))\right)\setminus\{v_1,v_2\}\right)\right)+\ell_G(B_1)+\ell_G(B_2).
\end{align*}
Since we reach a contradiction in all of the cases we conclude that every pair of leaf block of $G-(\Le(G)\cap\Si_1(G))$ have a common cut vertex (see for instance the graph depicted in Fig.~\ref{fig: block4}) and thus every leaf block of $G-(\Le(G)\cap\Si_1(G))$ have the same common cut vertex in $G-(\Le(G)\cap\Si_1(G))$.
\begin{figure}
\centering
    \includegraphics[scale=0.6]{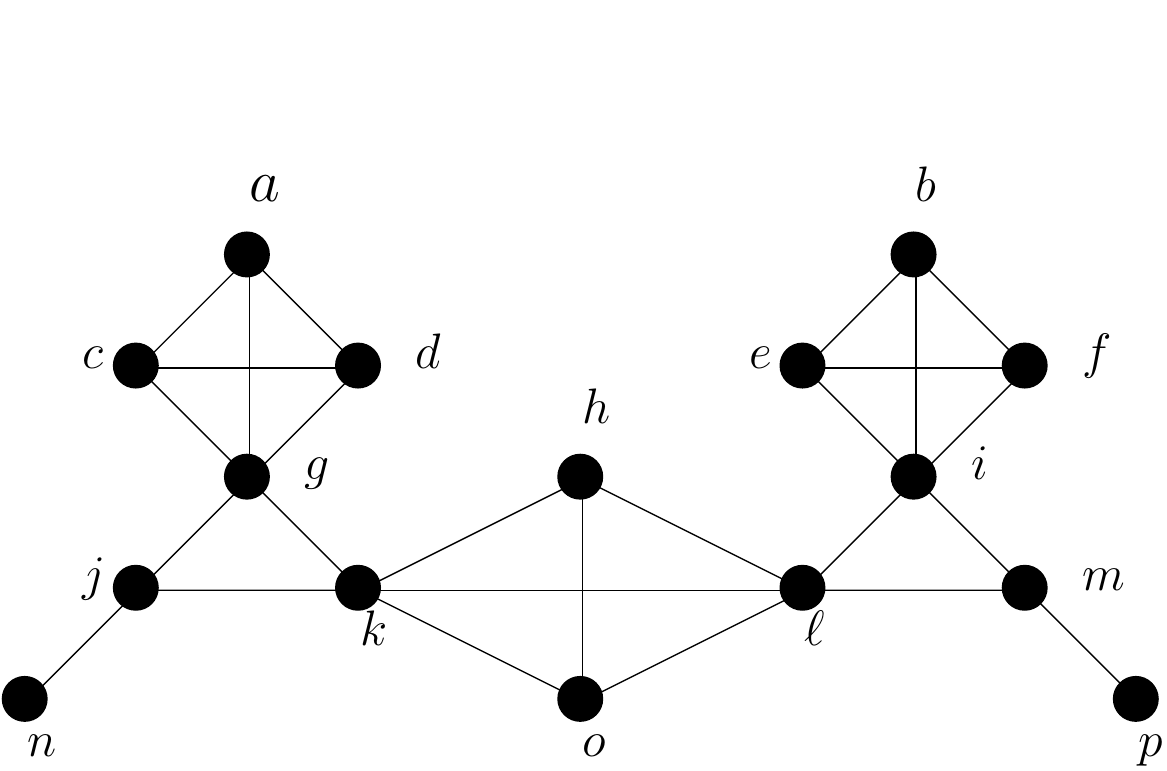}   
    \caption{Block graphs with four leaf blocks, one nonleaf simplicial blocks and two nonsimplicial blocks.}
    \label{fig: block3}
\end{figure}
\end{proof}

The \emph{pineapple} $P_q^p$ is the graph whose vertex set can be partitioned into a clique $Q$ on $q$ vertices and a stable set $I$ on $p$ vertices such that every vertex of $I$ is adjacent to the same vertex in $Q$ (see Fig~\ref{fig: pineapple}). 

\begin{thm}
Let $G$ be a block graph on $n$ vertices having maximum independence number $\alpha$.  Then, $\rho(G)\le \rho(P_{n-\alpha+1}^{\alpha-1})$. In addition, the equality holds if and only if $G=P_{n-\alpha+1}^{\alpha-1}$.
\end{thm}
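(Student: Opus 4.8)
The plan is to let $G$ be a block graph on $n$ vertices with $\alpha(G)=\alpha$ that attains the maximum spectral radius among all such graphs, and to prove $G=P_{n-\alpha+1}^{\alpha-1}$ by a chain of transformations that strictly increase the spectral radius while preserving the independence number. Throughout I let $x$ denote the Perron eigenvector of $G$, normalized to have all entries positive, and I argue in two phases: first reduce $G$ to a family of cliques sharing one common cut vertex, then concentrate those cliques into a single large one plus pendant edges.

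For the first phase I would establish that $G$ has a universal vertex, i.e.\ that its block-cut tree is a star. Applying Lemma~\ref{lem: bloques separados} to $H=G-(\Le(G)\cap\Si_1(G))$ shows that all leaf blocks of $H$ share one common cut vertex $v$. Since $H$ is a connected block graph in which every leaf of the block-cut tree is incident to the single vertex $v$, every block of $H$ must contain $v$: rooting the block-cut tree at $v$, any block-node at depth larger than one would lead down to a leaf block avoiding $v$, a contradiction. Thus $H$ is a union of cliques pairwise meeting exactly in $\{v\}$. The graph $G$ is recovered from $H$ by reattaching, at cut vertices lying in $V(H)$, the leaf blocks that were pruned. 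If some such leaf block $B$ is attached at a vertex $u\neq v$ with simplicial set $Z=V(B)\setminus\{u\}$, I would relocate $B$ to $v$ by deleting the edges between $Z$ and $u$ and adding the edges between $Z$ and $v$; since $v$ lies in every block of $H$ one checks $x_u\le x_v$, so Lemma~\ref{lem: k-trees} (with $W=Z$) gives a strict increase of $\rho(G)$, while the number of leaf blocks—and hence, by Lemma~\ref{lem: indpendent set} and Corollary~\ref{cor: leaf block formula}, the value $\alpha$—stays fixed. Repeating this forces every block of $G$ to contain $v$, so $v$ is universal and $G$ is a family of $m$ cliques $Q_1,\dots,Q_m$ with $Q_i\cap Q_j=\{v\}$; a maximum independent set selects exactly one non-$v$ vertex per clique, whence $m=\alpha$ and counting vertices gives $\sum_{i=1}^{\alpha}|V(Q_i)|=n+\alpha-1$.

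For the second phase I would concentrate the cliques. By symmetry all non-$v$ vertices of a fixed clique $Q_i$ receive a common eigenvector value $a_i$, and the eigen-equation $a_i\bigl(\rho(G)-|V(Q_i)|+2\bigr)=x_v$ (together with $\rho(G)\ge|V(Q_i)|-1$) shows that $a_i$ is strictly increasing in $|V(Q_i)|$. If two cliques $Q_i,Q_j$ both had at least three vertices, with $|V(Q_i)|\ge|V(Q_j)|$, I would move one non-$v$ vertex $w$ of $Q_j$ into $Q_i$ by deleting the edges from $w$ to $V(Q_j)\setminus\{v,w\}$ and adding the edges from $w$ to $V(Q_i)\setminus\{v\}$. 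Taking $W=\{w\}$ in Lemma~\ref{lem: k-trees}, the required inequality $\bigl(|V(Q_j)|-2\bigr)a_j\le\bigl(|V(Q_i)|-1\bigr)a_i$ holds, so $\rho(G)$ strictly increases; moreover the resulting graph is again a block graph with $\alpha$ cliques through $v$, so $\alpha(G)=\alpha$ is preserved. Hence at most one clique has more than two vertices, which forces $\alpha-1$ of them to be edges and the remaining one to be $K_{n-\alpha+1}$; that is, $G=P_{n-\alpha+1}^{\alpha-1}$. Since every transformation used is strictly increasing in the spectral radius, the pineapple is the unique maximizer, giving both $\rho(G)\le\rho(P_{n-\alpha+1}^{\alpha-1})$ for every block graph with independence number $\alpha$ and the stated equality case.

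The step I expect to be the main obstacle is the structural reduction of the first phase: upgrading the \emph{local} conclusion of Lemma~\ref{lem: bloques separados}, which only controls the leaf blocks of the pruned graph $H$, to the \emph{global} statement that $G$ has a universal vertex. The delicate point is verifying, at each reattachment of a leaf block toward $v$, that the independence number is genuinely preserved rather than merely bounded, which is exactly where the bookkeeping of Lemma~\ref{lem: indpendent set} and Corollary~\ref{cor: leaf block formula} must be invoked carefully; the subsequent concentration into the pineapple is then a routine eigenvector comparison via Lemma~\ref{lem: k-trees}.
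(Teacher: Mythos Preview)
Your overall strategy matches the paper's: invoke Lemma~\ref{lem: bloques separados} to constrain $H=G-(\Le(G)\cap\Si_1(G))$, reduce $G$ to a bouquet of cliques through a single vertex, then concentrate those cliques into the pineapple. Your Phase~2 (the concentration step) is correct and is essentially the paper's Claim~4.

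The genuine gap is in Phase~1, and it is precisely the step you did \emph{not} flag as delicate. You assert ``since $v$ lies in every block of $H$ one checks $x_u\le x_v$'', but $x$ is the Perron eigenvector of $G$, not of $H$. In $G$ the vertex $u\neq v$ may carry one or several large leaf blocks that $v$ does not see, so $N_G(u)\not\subseteq N_G[v]$ and no neighbourhood-inclusion argument for Perron entries is available; nothing you have established about the extremal $G$ excludes $x_u>x_v$. In that event your relocation of the leaf block from $u$ to $v$ via Lemma~\ref{lem: k-trees} is not licensed. The paper never asserts $x_u\le x_v$. Instead it proceeds through a chain of structural claims (at most one nonleaf block without simplicial vertices; then at most one nonleaf block; then none), and in each step it chooses the direction of the transformation according to which of two competing eigenvector sums is larger, so that Lemma~\ref{lem: k-trees} or Lemma~\ref{lem: moving-vertices} applies whichever way the inequality falls. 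Your argument can likely be repaired in the same spirit---when $x_u>x_v$, re-root by moving the other blocks of $H$ and any leaf blocks at $v$ over to $u$---but that symmetric case must be written out, with the independence-number bookkeeping (via Lemma~\ref{lem: indpendent set} and Corollary~\ref{cor: leaf block formula}) redone for it.
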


\begin{proof}
Let $G$ be a block graph with maximum independence set $\alpha$. Assume that $\alpha\ge 2$, otherwise $G=P_{n}^0=K_n$. By Lemma~\ref{lem: bloques separados} either every block of $G-(\Le(G)\cap\Si_1(G))$ has common cut vertex $v$ (see Fig.~\ref{fig: block4} for an example), or $G-(\Le(G)\cap\Si_1(G))=K_r$. Let $b$ be the number of nonleaf blocks having at least one simplicial vertex, let $t$ be the number of leaf blocks sharing the cut vertex $v$, and let $\ell$ be the number of leaf blocks of $G$ such that $v$ is not in their vertex sets. By Lemma~\ref{lem: indpendent set} and Corollary~\ref{cor: leaf block formula}, $\alpha=\ell+1$ whenever $b=0$, $t=0$ and $G-(\Le(G)\cap\Si_1(G))\neq K_r$, or $\alpha=b+t+\ell$, otherwise. 

In the sequel, we transform $G$ into $G^*$, whose vertex sets agree, where $v$ either is the only cut vertex of $G^*$ in every nonleaf block of $G^*$ or is the only simplicial vertex of the only nonleaf block of $G^*$, we will use $b^*$ to denote the number of nonleaf blocks in $G^*$ having at least one simplicial vertex, and $t^*$ to denote the number of leaf blocks sharing the cut vertex $v$, and $\ell^*$ to denote the number of leaf blocks such that $v$ does not belong to them. We will split the proof into four claims.

\vspace{0.5cm}

\emph{Claim 1: There exists at most one nonleaf block in $G$ without simplicial vertices. }

\vspace{0.5cm}

Suppose, towards a contradiction, that there exist two nonoleaf blocks $B_1$ and $B_2$ without simplicial vertices. Consider the graph $G^*$ obtained from $G$ by adding every edge $v_1v_2$ with $v_i\in V(B_i)\setminus\{v\}$ for each $i\in\{1,2\}$. Clearly, $G^*$ is a block graph. On the one hand, if $b=0$ and $t=0$ then $b^*=1$, whenever there is exactly two nonleaf blocks sharing the cut vertex $v$, or else $b^*=0$, $t^*=t$ and $\ell^*=\ell$. On the other hand, $b^*=b$, $t^*=t$ and $\ell^*=\ell$. Hence $\alpha(G^*)=\alpha$. Besides, by Lemma~\ref{lem: adding edges} we have $\rho(G)<\rho(G^*)$. We reach a contradiction. Therefore, $G$ has at most one nonleaf block without simplicial vertices. 

\medskip

Claim 1 implies $\alpha=b+t+\ell$.

\begin{figure}
\centering
    \includegraphics[scale=0.6]{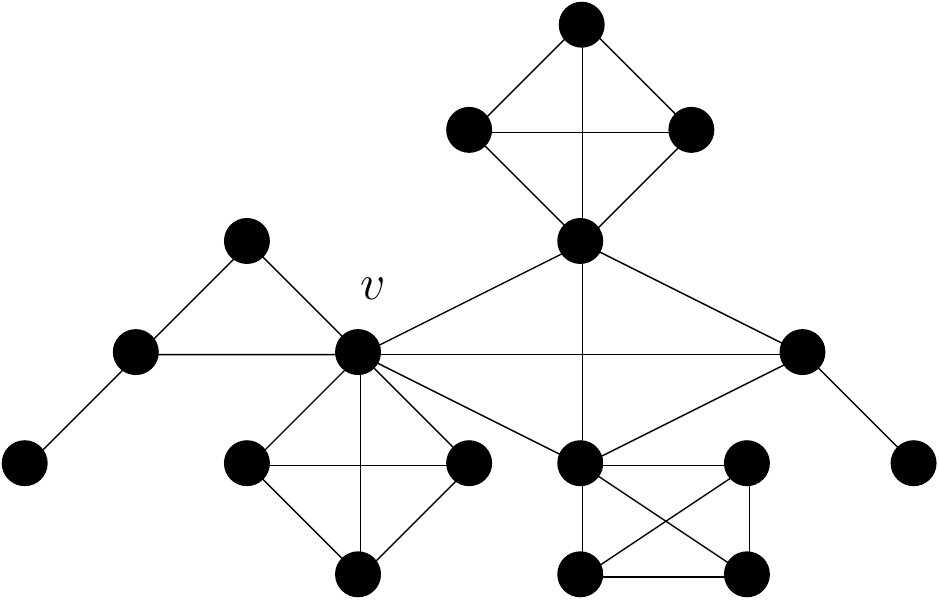}   
    \caption{Block graph $G$ with $G-(\Le(G)\cap\Si_1(G))$ having all its block sharing the cut vertex $v$.}
    \label{fig: block4}
\end{figure}

\vspace{0.5cm}

\emph{Claim 2: There exists at most one nonleaf block $B_1$}

\vspace{0.5cm}

Suppose, towards a contradiction, that there exist two nonleaf blocks $B_1$ and $B_2$ in $G$. By Claim 1 at most one of $B_1$ and $B_2$ have no simplicial vertex. First, assume, without lose of generality, that $V(B_2)$ contains no simplicial vertex. Set $S_1=V(B_1)\cap\Si_1(G)$. We transform the graph $G$ into $G^*$ by adding every edge $v_1v_2$ with $v_i\in V(B_1)\setminus\{v\}$ for every $i\in\{1,2\}$. Clearly, $b^*=b$, $\ell^*=\ell$ and $t^*=t$. Hence $\alpha(G^*)=\alpha$. In addition, by Lemma~\ref{lem: adding edges}, $\rho(G)<\rho(G^*)$, reaching a contradiction. Finally, assume that $V(B_i)\cap \Si_1(G)\neq\emptyset$  and let $S_i=V(B_i)\cap\Si_1(G)$, for each $i\in\{1,2\}$. Suppose, without losing generality, that $\sum_{u\in S_1} x_u\ge \sum_{u\in S_2} x_u$. We construct the graph $G^*$ from $G$ by deleting every edge $xy$ with $x\in S_2$ and $y\in V(B_2)\setminus(S_2\cup\{v\})$ and adding every edge $yz$ with $y\in V(B_2)\setminus(S_2\cup \{v\})$ and $z\in V(B_1)\setminus\{v\}$. Clearly, $b^*=b-1$, $t^*=t+1$ and $\ell^*=\ell$. Hence, $\alpha(G^*)=\alpha$. Besides, by Lemma~\ref{lem: k-trees}, $\rho(G)<\rho(G^*)$, reaching a contradiction.

\vspace{0.5cm}

\emph{Claim 3: Every block in $G$ is a leaf block. }

\vspace{0.5cm}

Suppose, towards a contradiction, that $G$ has at least a nonleaf block. First assume that $B$ (by Claim 1) is the only nonleaf block in $G$ having no simplicial vertex. Hence, by Claim 2, the remaining blocks are leaf blocks.  Let $B'$ be one of those leaf blocks having $v'$ as the only cut vertex of $B'$ in $G$. By Lemma~\ref{lem: adding edges}, the graph $G^*$ obtained from $G$ by adding every edge $ww'$ with $w\in V(B)\setminus\{v'\}$ and $w'\in V(B')\setminus\{v'\}$ satisfies $\rho(G)<\rho(G^*)$. In addition, $b^*=1$, $t^*=t$ and $\ell^*=\ell-1$. Hence, $\alpha(G^*)=\alpha$, reaching a contradiction. 

Assume now that every nonleaf block of $G$ has at least one simplicial vertex. By Claim 2 we conclude that $G$ has only one nonleaf block having at least one simplicial vertex. Hence there exists a leaf  block $B$ having $u$ as the only cut vertex of $G$. Suppose that $B'$ is another leaf block having $u'$ as the only cut vertex of $G$ with $u'\neq u$. Assume first that $x_u\ge x_{u'}$. By Lemma~\ref{lem: moving-vertices}, the graph $G^*$ obtained from $G$ by deleting every edge $w'u'$ with $w'\in V(B')\setminus\{u'\}$ and adding every edge $w'u$  with $w'\in V(B')\setminus\{u'\}$, satisfies $\rho(G)<\rho(G^*)$. Notice that, $b^*=b-1=0$, whenever $\ell=2$, and $b^*=b$ if $\ell>2$. In both cases $\alpha(G^*)=\alpha$. By symmetry, if $x_u\le x_{u'}$ applying the analogous transformation we obtain a graph $G^*$  with $\alpha(G^*)=\alpha$ such that $\rho(G)<\rho(G^*)$.  We reach a contradiction. 

\vspace{0.5cm}

\emph{Claim 4: $G=P_{n-\alpha+1}^{\alpha-1}$.}

\vspace{0.5cm}

Claim 3 implies that every block in $G$ is a leaf block, sharing a cut vertex $u$. Hence it remains to prove that at most one block $B$ has at least three vertices. Notice that if every leaf block in $G$ has exactly two vertices, then $G=K_{1,n-1}$. Suppose, towards a contradiction, that $B_1$ and $B_2$ are two leaf blocks having at least three vertices. Let $u_i\in V(B_i)$ such that $u_i\neq u$ and let $S_i=V(B_i)\setminus\{u,u_i\}$, for each $i\in\{1,2\}$. Assume, without losing generality, that $\sum_{s\in S_2}x_s\le\sum_{s\in S_1}x_s$. Hence, by Lemmas~\ref{lem: adding edges} and~\ref{lem: k-trees}, the graph $G^*$ obtained from $G$ by deleting every edge $u_2s$ with $s\in S_2$ and adding every edge $u_2w$ with $w\in S_1\cup\{v_1\}$, satisfies $\rho(G)<\rho(G^*)$. Besides, clearly $\alpha(G^*)=\alpha$, reaching a contradiction. \end{proof}

The following lemma give an upper bound of the spectral radius of the pinapple graph. 

\begin{figure}
\centering
    \includegraphics[scale=0.6]{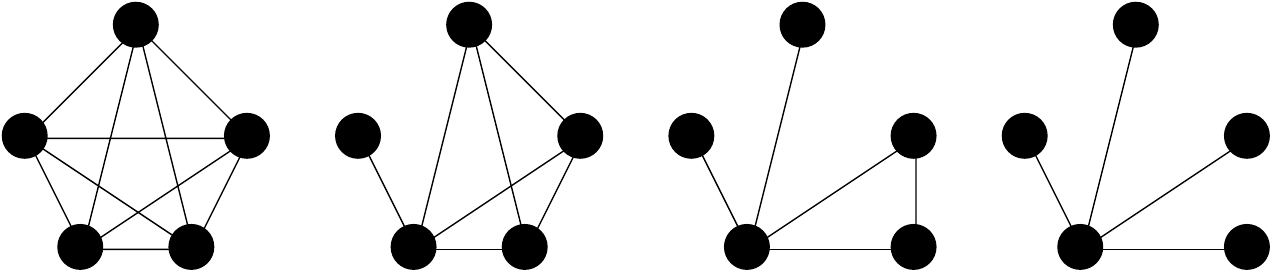}   
    \caption{From left to right we have $P_{5-\alpha+1}^{\alpha-1}$ for every $1\le\alpha\le 4$.}
    \label{fig: pineapple}
\end{figure}




\begin{lem}\label{radius_pineapple}
	Let $P_{n-\alpha+1}^{\alpha-1}$ be the pineapple graph with $2 \leq \alpha \le n-2$. Then 
	\begin{equation}\label{bound radius pineapple 1}
	\rho(P_{n-\alpha+1}^{\alpha-1}) \leq 
	\beta - 1 + \frac{\sqrt{(\beta^2-n)^2+ 4 (n-\beta)(2\beta -1)} - (\beta^2-n)}{4\beta-2}, 
	\end{equation}
	for $2 \leq \alpha \le n-\sqrt{n-1}$, and
	\begin{equation}\label{bound radius pineapple 2}
	\rho(P_{n-\alpha+1}^{\alpha-1}) \leq 
	\frac{ 2 \sqrt{n - 1} + \sqrt{(\alpha-1)\gamma^2+ (n-\alpha)(2 -\gamma)}}{2 +\gamma},
	\end{equation} 
	for $n-\sqrt{n-1} < \alpha \leq n-2$, where $\beta=n-\alpha+1$ and $\gamma=1 - \frac{n-\alpha-1}{\sqrt{n-1}}$.
\end{lem}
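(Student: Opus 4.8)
The plan is to use the strong symmetry of the pineapple to reduce $\rho(P_{n-\alpha+1}^{\alpha-1})$ to the largest root of an explicit cubic, and then to bound that root by two auxiliary quadratics, one tailored to each of the two ranges of $\alpha$. Write $\beta=n-\alpha+1$, so that $Q$ is a clique on $\beta$ vertices, $I$ a stable set on $\alpha-1$ vertices, and the apex $w\in Q$ is the unique universal vertex. I would split $V$ into the three orbits of the automorphism group, namely $\{w\}$, the $\beta-1$ remaining vertices of $Q$, and the $\alpha-1$ vertices of $I$. Since the graph is connected the Perron eigenvector is positive and, being unique up to scaling, is constant on each orbit; calling its values $a,b,c$ one gets $\rho a=(\beta-1)b+(\alpha-1)c$, $\rho b=a+(\beta-2)b$ and $\rho c=a$. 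Eliminating $a,b,c$ shows that $\rho$ is the largest root of
\[ f(\lambda)=\lambda^{3}+(2-\beta)\lambda^{2}-(n-1)\lambda+(\alpha-1)(\beta-2), \]
which I would also record in the factored form $f(\lambda)=(\lambda-\beta+2)\bigl(\lambda^{2}-(n-1)\bigr)-(\beta-1)(\beta-2)$.

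The engine of the proof is an elementary remark about shifting. For any real $\tau$ the monic cubic satisfies $f(\tau+s)=s^{3}+q_{\tau}(s)$, where $q_{\tau}$ is a quadratic with positive leading coefficient $3\tau+2-\beta$ and constant term $q_{\tau}(0)=f(\tau)$. If $\tau\le\rho$ and $f(\tau)\le 0$, then $q_{\tau}$ has a unique root $s_{\tau}\ge 0$ on $[0,\infty)$ with $q_{\tau}\le 0$ on $[0,s_{\tau}]$; since $s^{*}:=\rho-\tau\ge 0$ and $q_{\tau}(s^{*})=-\,(s^{*})^{3}\le 0$, we conclude $s^{*}\le s_{\tau}$, that is $\rho\le\tau+s_{\tau}$. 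Thus every admissible shift produces an upper bound by dropping the cubic term $s^{3}\ge 0$. Because $P_{n-\alpha+1}^{\alpha-1}$ contains the clique $K_{\beta}$ and the spanning star $K_{1,n-1}$ centred at $w$, Lemma~\ref{lem: adding edges} yields the two free lower bounds $\rho\ge\beta-1$ and $\rho\ge\sqrt{n-1}$, which are precisely the two shifts I will feed into the remark.

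For $2\le\alpha\le n-\sqrt{n-1}$ one has $\beta-1=n-\alpha\ge\sqrt{n-1}$, so I would take $\tau=\beta-1$; here $f(\beta-1)=-(\alpha-1)<0$, and a direct expansion gives $q_{\beta-1}(s)=(2\beta-1)s^{2}+(\beta^{2}-n)s-(n-\beta)$, whose positive root is exactly the second summand of~\eqref{bound radius pineapple 1} (note $2(2\beta-1)=4\beta-2$). For $n-\sqrt{n-1}<\alpha\le n-2$ the star dominates and I would take $\tau=\sqrt{n-1}$; here $f(\sqrt{n-1})=-(\beta-1)(\beta-2)\le 0$, and substituting $t=\lambda-\sqrt{n-1}$ into the factored form, together with the identity $\sqrt{n-1}-\beta+2=\gamma\sqrt{n-1}$, turns $f(\sqrt{n-1}+t)$ into $t^{3}+q_{\sqrt{n-1}}(t)$ with $q_{\sqrt{n-1}}(t)=(2+\gamma)\sqrt{n-1}\,t^{2}+2\gamma(n-1)t-(\beta-1)(\beta-2)$. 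Solving this quadratic for its positive root, adding back $\sqrt{n-1}$, and simplifying with the algebraic identity $(\alpha-1)\gamma^{2}+(n-\alpha)(2-\gamma)=\gamma^{2}(n-1)+(2+\gamma)\tfrac{(\beta-1)(\beta-2)}{\sqrt{n-1}}$ should reproduce exactly~\eqref{bound radius pineapple 2}.

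I expect this last simplification to be the main obstacle: one must execute the quadratic solution and verify the closed-form identity, and one must check that $\gamma>0$ on the stated range (equivalently $\beta-1<\sqrt{n-1}$) so that $q_{\sqrt{n-1}}$ genuinely opens upward and $\sqrt{n-1}$ is the larger, hence sharper, of the two shifts. It is no accident that the two ranges meet at $\alpha=n-\sqrt{n-1}$: this is exactly where the clique shift $\beta-1$ and the star shift $\sqrt{n-1}$ coincide, and on each side the argument simply starts from whichever of $K_{\beta}$ and $K_{1,n-1}$ gives the better lower bound for $\rho$.
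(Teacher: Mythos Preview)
Your proposal is correct and follows essentially the same route as the paper: both obtain the same cubic for $\rho$ (you derive it via the three-orbit eigenvector equations, the paper cites it), then shift by $\tau=\beta-1$ in the first range and $\tau=\sqrt{n-1}$ in the second, drop the nonnegative term $s^{3}$, and solve the resulting quadratic. Your ``engine'' is exactly the paper's one-line observation ``since $t>0$, the quadratic part is negative,'' stated more carefully; the algebraic identity you flag for Case~2 is indeed the only nontrivial simplification, and it checks out via $(2+\gamma)\tfrac{(\beta-1)(\beta-2)}{\sqrt{n-1}}=(n-\alpha)(2-\gamma)-(n-\alpha)\gamma^{2}$.
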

\begin{proof}
	In~\cite[Proposition 1.1]{TSH2016}, it is proved that the characteristic polynomial  $p(x)=\det(xI-A)$, where $A$ is the adjacency matrix of $P_{n-\alpha+1}^{\alpha-1}$, satisfies
	\[
	p(x) = x^{\alpha-2} (x+1)^{n-\alpha-1} (x^3 - (n-\alpha-1)x^2 - (n-1)x + (\alpha-1) (n-\alpha-1)).
	\]
 Perron-Frobenius implies that $\rho(P_{n-\alpha+1}^{\alpha-1})$ coincides with the maximum positive root of  
	\begin{equation}\label{eq_pineapple}
	q(x) = x^3 -(n-\alpha-1)x^2 -(n-1)x + (\alpha-1) (n-\alpha-1).
	\end{equation}
	We will find an upper bound to the maximum positive root of $q$. Notice that the pineapple $P_{n-\alpha+1}^{\alpha-1}$ contains $K_{n-\alpha+1}$ and  $K_{1,n-1}$ as a subgraph, based on this fact $\max\{n-\alpha,\sqrt{n-1}\}\le\rho(P_{n-\alpha+1}^{\alpha-1})$ (see \cite[Corollary 7]{LLT2004} for more details). We will split the task into two cases. 
	
\vspace{0.5cm}
	\emph{Case 1: $2 \leq \alpha \le n-\sqrt{n-1}$.} 
\vspace{0.5cm}	

It is easy to see that
\[
	\rho(P_{n-\alpha+1}^{\alpha-1}) =  \beta - 1 + t,
\] 
	where $\beta=n-\alpha+1$ and $t$ is the maximum positive solution of
	\[
	x^3 + (2\beta-1)x^2 + (\beta^2-n)x - (\alpha-1)=0.
	\]
	Since $t>0$, we have that 
	\[
	(2\beta-1)t^2 + (\beta^2-n)t - (\alpha-1) < 0.
	\]
	It follows immediately that
	\[
	t \le \frac{\sqrt{(\beta^2-n)^2+ 4 (n-\beta)(2\beta -1)} - (\beta^2-n)}{4\beta-2}.
	\]
	Finally, we conclude 
	\[
	\rho(P_{n-\alpha+1}^{\alpha-1}) \le \beta - 1 + \frac{\sqrt{(\beta^2-n)^2+ 4 (n-\beta)(2\beta -1)} - (\beta^2-n)}{4\beta-2}.
	\]
	
\vspace{0.5cm}	
	\emph{Case 2: $n-\sqrt{n-1} < \alpha \le n-2$.} 
\vspace{0.5cm}	

It is easy to see that
	\[
	\rho(P_{n-\alpha+1}^{\alpha-1}) =  \sqrt{n - 1} + t,
	\] 
	where $t$ is the maximum positive solution of
	\[
	x^3 + \sqrt{n-1}(2 +\gamma) \ x^2 + 2(n-1)\gamma \ x - (n-\alpha)(n-\alpha-1)=0,
	\]
	where $\gamma=1 - \frac{n-\alpha-1}{\sqrt{n-1}}$. Since $t>0$, we see that 
	\[
	\sqrt{n-1}(2 +\gamma) \ t^2 + 2(n-1)\gamma \ t - (n-\alpha)(n-\alpha-1) < 0.
	\]
	It follows immediately that
	\[
	t \le \frac{\sqrt{(\alpha-1)\gamma^2+ (n-\alpha)(2 -\gamma)} - \sqrt{n-1}\gamma}{2 +\gamma}.
	\]
	Finally, we conclude 
	\begin{eqnarray}
	\label{bound radius pineapple 2 bis} \rho(P_{n-\alpha+1}^{\alpha-1}) &\le& \sqrt{n - 1} + \frac{\sqrt{(\alpha-1)\gamma^2+ (n-\alpha)(2 -\gamma)} - 
		\sqrt{n-1}\gamma}{2 +\gamma}\\
	\nonumber                                && = \frac{ 2 \sqrt{n - 1} + \sqrt{(\alpha-1)\gamma^2+ (n-\alpha)(2 -\gamma)}}{2 +\gamma}.
	\end{eqnarray}
\end{proof}

\begin{rmk}
	In this remark, we compare the bounds for the spectral radius $\rho(P_{n-\alpha+1}^{\alpha-1})$ obtained in Lemma \ref{radius_pineapple} with the bounds in 
	\cite[Corollaries 7 and 8]{LLT2004}.
	
	Under the assumption $2 \leq \alpha \le n-\sqrt{n-1}$, we have 
	$$
	\rho(P_{n-\alpha+1}^{\alpha-1}) \le \beta - 1 + \frac{\sqrt{(\beta^2-n)^2+ 4 (n-\beta)(2\beta -1)} - (\beta^2-n)}{4\beta-2}.
	$$
	By the Mean Value Theorem, we see that
	$$
	\frac{\sqrt{(\beta^2-n)^2+ 4 (n-\beta)(2\beta -1)} - (\beta^2-n)}{4\beta-2} =  \frac{4 
		(n-\beta)(2\beta -1)}{(4\beta-2) 2 \sqrt{\xi}} = \frac{(n-\beta)}{\sqrt{\xi}},
	$$
	where $(\beta^2-n)^2 < \xi <  (\beta^2-n)^2+ 4 (n-\beta)(2\beta -1) $. It follows that 
	$$
	\beta - 1 + \frac{\sqrt{(\beta^2-n)^2+ 4 (n-\beta)(2\beta -1)} - (\beta^2-n)}{4\beta-2} < \beta - 1 + \frac{(n-\beta)}{\beta^2-n}. 
	$$
	Hence the bound \eqref{bound radius pineapple 1} refines the one present in \cite[Corollaries 7]{LLT2004}.
	
	We now turn to the case $n-\sqrt{n-1} < \alpha \le n-2$. By \eqref{bound radius pineapple 2 bis}, we have 
	$$
	\rho(P_{n-\alpha+1}^{\alpha-1}) \le \sqrt{n - 1} + \frac{\sqrt{(\alpha-1)\gamma^2+ (n-\alpha)(2 -\gamma)} - 
		\sqrt{n-1}\gamma}{2 +\gamma}
	$$
	By the Mean Value Theorem, we see that
	$$
	\frac{\sqrt{(\alpha-1)\gamma^2+ (n-\alpha)(2 -\gamma)} - \sqrt{n-1}\gamma}{2 +\gamma} =  \frac{(n-\alpha)(1-\gamma)}{ 2 \sqrt{\xi}},
	$$
	where $(n-1)\gamma^2 < \xi <  (\alpha-1)\gamma^2+ (n-\alpha)(2 -\gamma) $. It follows that 
	$$
	\sqrt{n - 1} + \frac{\sqrt{(\alpha-1)\gamma^2+ (n-\alpha)(2 -\gamma)} - \sqrt{n-1}\gamma}{2 +\gamma} < \sqrt{n - 1} + \frac{(n-\alpha)(1-\gamma)}{ 2 
		\sqrt{n-1} \gamma}. 
	$$
	Thus the bound \eqref{bound radius pineapple 2} refines the one presented	 in \cite[Corollaries 8]{LLT2004}.
\end{rmk}

\begin{cor}
	Let $G$ be a block graph on $n$ vertices having maximum independence number $\alpha$.  Then,  
	\begin{equation}
	\rho(G)\le  
	\beta - 1 + \frac{\sqrt{(\beta^2-n)^2+ 4 (n-\beta)(2\beta -1)} - (\beta^2-n)}{4\beta-2}, 
	\end{equation}
	for $2 \leq \alpha \le n-\sqrt{n-1}$, and
	\begin{equation}
	\rho(G) \le 
	\frac{ 2 \sqrt{n - 1} + \sqrt{(\alpha-1)\gamma^2+ (n-\alpha)(2 -\gamma)}}{2 +\gamma},
	\end{equation} 
	for $n-\sqrt{n-1} < \alpha \leq n-2$, where $\beta=n-\alpha+1$ and $\gamma=1 - \frac{n-\alpha-1}{\sqrt{n-1}}$.
\end{cor}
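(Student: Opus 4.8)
The plan is to obtain the stated bounds by chaining two results already established in this section: the extremal characterization showing that the pineapple has the largest spectral radius among block graphs with a prescribed independence number, and the explicit upper estimates for the pineapple's spectral radius provided by Lemma~\ref{radius_pineapple}. The corollary is therefore essentially a transitivity statement, and I expect no genuine difficulty in carrying it out.

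First I would invoke the theorem immediately preceding Lemma~\ref{radius_pineapple}, which asserts that among all block graphs on $n$ vertices with maximum independence number $\alpha$, the pineapple $P_{n-\alpha+1}^{\alpha-1}$ attains the maximum spectral radius; in particular $\rho(G)\le\rho(P_{n-\alpha+1}^{\alpha-1})$ for every such $G$. Next I would apply Lemma~\ref{radius_pineapple} with the same parameters $n$ and $\alpha$. Its two regimes, namely $2\le\alpha\le n-\sqrt{n-1}$ and $n-\sqrt{n-1}<\alpha\le n-2$, coincide verbatim with the two cases stated in the corollary, and the right-hand sides of its inequalities~\eqref{bound radius pineapple 1} and~\eqref{bound radius pineapple 2} are precisely the claimed expressions (recall $\beta=n-\alpha+1$ and $\gamma=1-\frac{n-\alpha-1}{\sqrt{n-1}}$). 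Combining the two inequalities $\rho(G)\le\rho(P_{n-\alpha+1}^{\alpha-1})\le(\text{bound})$ in each regime then yields the assertion.

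The only point that deserves a word of care, rather than a true obstacle, is the alignment of the admissible ranges of $\alpha$: the corollary is stated for $2\le\alpha\le n-2$, which is exactly the interval on which Lemma~\ref{radius_pineapple} is valid, so no extra boundary analysis (for instance at $\alpha=1$, where $G=K_n$, or at $\alpha=n-1$) is required. Consequently the proof reduces to transitivity of the two displayed inequalities, and there is no substantive difficulty to overcome.
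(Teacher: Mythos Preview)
Your proposal is correct and matches the paper's approach: the corollary is stated without proof, being an immediate consequence of combining the extremal theorem $\rho(G)\le\rho(P_{n-\alpha+1}^{\alpha-1})$ with the bounds of Lemma~\ref{radius_pineapple}. There is nothing further to add.
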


\bibliographystyle{abbrv}
\bibliography{spectralradius}

\end{document}